\newtheorem{thm}{Theorem}[section]
\newtheorem{prop}[thm]{Proposition}
\newtheorem{lem}[thm]{Lemma}
\theoremstyle{definition}
\newtheorem{defn}[thm]{Definition}
\newtheorem{remark}[thm]{Remark}
\newcommand{\R}{\mathbb{R}}
\newcommand{\N}{\mathbb{N}}
\newcommand{\D}{\mathcal{D}}
\newcommand{\E}{\mathcal{E}}
\newcommand{\F}{\mathcal{F}}
\newcommand{\M}{\mathcal{M}}
\newcommand{\X}{\mathcal{X}}
\DeclareMathOperator{\spt}{\mathrm{spt}}
\DeclareMathOperator{\Int}{\mathrm{Int}}
\DeclareMathOperator{\Haus}{\mathcal{H}}
\newcommand{\eps}{\varepsilon}
\newcommand{\Om}{\Omega}
\author{Stan Alama, Lia Bronsard, and Silas Vriend}
\title{The standard lens cluster in $\R^2$ \\ uniquely minimizes relative perimeter}
\date\today
\begin{document}

	\maketitle

    \begin{abstract}
        In this article we consider the isoperimetric problem for partitioning the plane into three disjoint domains, one having unit area and the remaining two having infinite area.  We show that the only solution, up to rigid motions of the plane, is a lens cluster consisting of circular arcs containing the finite area region, attached to a single axis, with two triple junctions where the arcs meet at 120 degree angles.  In particular, we show that such a configuration is a local minimizer of the total perimeter functional, and on the other hand any local minimizer of perimeter among clusters with the given area constraints must coincide with a lens cluster having this geometry.  Some known results and conjectures on similar problems with both finite and infinite area constraints are presented at the conclusion. 
    \end{abstract}

    \section{Introduction}

    In a previous paper \cite{alama2023free} we have conjectured that the minimal cluster for the planar partitioning problem with one unit area chamber and two improper chambers (i.e.\ of infinite area) is the standard lens cluster depicted in Figure \ref{fig:lens-cluster}. The boundary set of this cluster consists of two collinear rays emanating from a pair of standard triple junctions which are joined by a pair of symmetric circular arcs, meeting at 120 degree angles. The center corresponding to each circular arc lies on the midpoint of the opposite circular arc, and these conditions uniquely determine the geometry of the lens, also known as a  \textit{vesica piscis} (or fish's bladder.) In this paper we show that this configuration is the unique solution to the partitioning problem, in the sense that any other configuration which minimizes perimeter locally (i.e.\ among compactly supported variations preserving the area of the proper chamber) must be congruent to the unit area lens cluster.
    
    \begin{figure}[ht]
        \centering
        \includegraphics[scale=0.425]{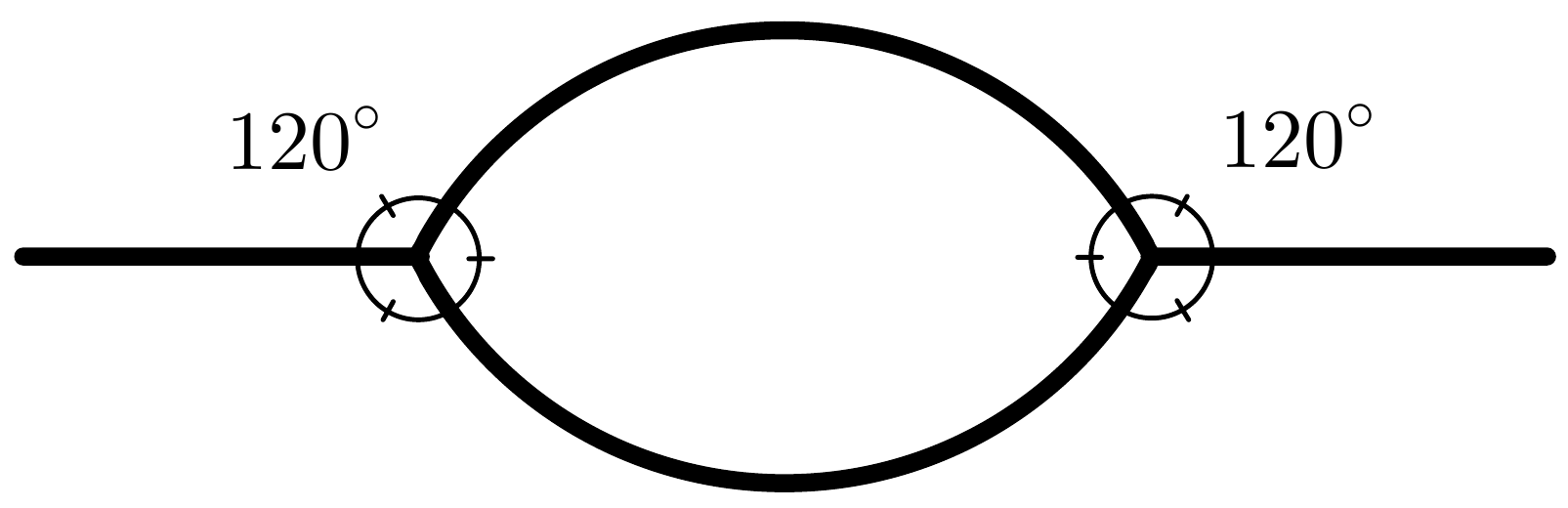}
        \caption{The standard lens cluster in $\R^2$}
        \label{fig:lens-cluster}
    \end{figure}

    All partitioning problems are extensions and generalizations of the classical isoperimetric problem: finding the shape of given area (or Lebesgue measure, in the general case) whose boundary has the smallest possible perimeter.  More recently attention has turned to partitioning problems in which the plane (or space) is subdivided into several disjoint but complementary domains, each with given measure.  This gave rise to the famous  Double-Bubble Conjecture, to determine the optimal shape of a pair of planar regions with given measure whose combined perimeter is minimized. The solution is a double-bubble composed of arcs of circles (spheres, in higher dimensions), meeting at 120 degree angles at the junction points \cite{morgan1994soap}; this was proven by a group of undergraduate students at Williams in \cite{Foisy_etal}.  An analogous question may be posed in $n$ dimensions, and other generalizations include partitions with any number of chambers of (finite) prescribed measure.  The proof that double-bubbles, domains composed of spherical caps meeting at 120 angles, minimize surface area in $\R^3$ with two given volumes, was given by Hutchings, Morgan, Ritor\'e, and Ros \cite{HutchingsMorganRitoreRos}, and generalized to any dimension by Reichardt \cite{Reichardt}.
    The case of triple-bubbles, with three prescribed areas in the plane, was handled by Wichiramala \cite{wichiramala2004proof}, but the complexity of the method has limited progress on multiple-bubble problems with more than three given areas.

    Multiple bubble configurations also arise in certain models of block copolymers: nonlocal variational problems for multiphase materials exhibiting short range attractive forces but longer range repulsion between different phases.  In an appropriate limit (described by Gamma-convergence) minimizers of a ternary model exhibit double-bubble configurations \cite{RenWei, alama2021periodic}, while a quaternary model (with four phases) produces triple-bubble minimizers \cite{ABLW2}.  

    It is in connection with the triblock copolymer models that we encountered the lens cluster problem treated in this paper.  If one considers a partitioning problem in the torus, with three phases, two of which occupy nearly all of the total area with the third accounting for only a tiny fraction, we expect minimizers of the nonlocal triblock copolymer energy to form a lamellar pattern with the two majority phases, with tiny droplets of the third phase aligned on each lamellar stripe.  By blowing up the droplets in the limit of vanishing area of the third phase we expect to recover the lens shape in the plane, and the characterization of the  \textit{vesica piscis} as the unique minimizer (up to symmetries) of this problem will be critical to the analysis of the small-area limit of the triblock problem.  (See \cite{ABLW3}.)
    
    Isoperimetric minimizers also arise as stationary or asymptotic solutions to geometrical flows.  
    In work of Bellettini-Novaga \cite{bellettini2011curvature} and Schn\"urer et al.\ \cite{schnurer2011evolution}, the lens configuration appears as the limiting configuration of a planar network with two triple junctions under curve-shortening flow.


 \subsection*{Clusters}

    In studying the  partitioning problem we adopt the notation and vocabulary of {\it clusters} as developed in Maggi \cite{maggi2012sets}.  Unlike the clusters defined in \cite{maggi2012sets}, our configurations will have two chambers with infinite measure, and so we introduce a definition which extends the framework of clusters to the more general setting in which there may be several chambers with infinite measure. 
    
    Let $\N_0$ denote the set $\N \cup \{0\}$. 
    For greater generality, we set the definitions to follow in dimension $n \geq 2$. Let $\abs{E}$ denote the Lebesgue measure of a set $E$, let $\Haus^k$ denote the $k$-dimensional Hausdorff measure, and let $P(E)$ denote the perimeter of $E$.

    \begin{defn}\label{clusterdef}
        An \textit{$(N, M)$-cluster} $\X = (\E, \F)$ in $\R^n$ is a pair of finite families of sets of {\it locally} finite perimeter 
        \begin{equation*}
            \E = \{\E(h)\}_{h = 1}^N, \quad \F = \{\F(i)\}_{i = 1}^M, \quad N, M \in \N_0,
        \end{equation*}
        which satisfies the following list of properties:
        \begin{enumerate}[label=(\roman*)]
            \item \textit{Proper chambers:} the sets $\E(h)$, $1 \leq h \leq N$, satisfy  $0 < \abs{\E(h)} < \infty$.

            \item \textit{Improper chambers:} the sets $\F(i)$, $1 \leq i \leq M$, satisfy $\abs{\F(i)} = \infty$.

            \item \textit{Null overlap:} the sets $\E(h)$ and $\F(i)$ have pairwise Lebesgue null overlap, that is,
            \begin{align*}
                \abs{\E(h) \cap \E(k)} = 0, &\quad 1 \leq h < k \leq N, \\
                \abs{\E(h) \cap \F(i)} = 0, &\quad 1 \leq h \leq N,\; 1 \leq i \leq M, \\
                \abs{\F(i) \cap \F(j)} = 0, &\quad 1 \leq i < j \leq M.
            \end{align*}
            \item \textit{Full measure:}  following  \cite{maggi2012sets} Section 30.3], we have 
            \[
                \abs{\R^n \setminus \left(\bigcup_{h = 1}^{N} \E(h)
                   \cup \bigcup_{i=1}^M \F(i)\right)} = 0.
            \]
        \end{enumerate}
    \end{defn}

    \begin{remark}\label{perimeter_remark}
        We would like to immediately highlight a key difference between this definition and those of \cite{maggi2012sets}: the proper chambers are \textit{not} assumed to have finite perimeter \textit{a priori}.  Nevertheless, we will show in Proposition~\ref{finiteper} that, for the case of $(1,2)$-clusters which minimize perimeter locally, the proper chamber must have finite total perimeter.
    \end{remark}

    We remark that as a consequence of the full measure axiom, any $(N,M)$-cluster necessarily has $M \geq 1$. Proper $N$-clusters, having $N$ disjoint chambers $\E=\{\E(h)\}_{h=1}^{N}$ of finite measure and perimeter, are defined in \cite{maggi2012sets}. Given such an $N$-cluster we may always append the exterior chamber to obtain an $(N,1)$-cluster consistent with this definition. Conversely, any $(N,1)$-cluster  whose proper chambers have finite perimeter must be an $N$-cluster as defined in \cite{maggi2012sets}, together with its exterior chamber (up to a null set). Note that {\cite{maggi2012sets} implicitly includes the complement of an $N$-cluster as an improper chamber by denoting it $\E(0)$. Note  also that Definition~\ref{clusterdef} with $N=0$ extends the notion of an improper $M$-cluster from \cite{maggi2012sets}. 
    
    It will often be convenient to denote the proper and improper chambers of a cluster with a single index, and so we also write an $(N,M)$-cluster as  $\X=\{\X(j)\}_{j=1}^{N+M}$ by setting
    \begin{equation*}
        \X(j) = \begin{cases}
            \E(j) & \text{if } 1 \leq j \leq N, \\
            \F(j - N) & \text{if } N + 1 \leq j \leq N + M.
        \end{cases}
    \end{equation*}

    \begin{remark}
        The chambers of an $(N,M)$-cluster are \textit{not} assumed to be indecomposable (i.e. measure-theoretically connected; see \cite{maggi2012sets} Exercise 16.9).  Thus, a cluster could also have several connected components in any one or more  of its chambers.
    \end{remark}



%

    \begin{defn}
        Let $\overline{\R} = \R \cup \{\pm \infty\}$ denote the extended real numbers, and let $\X = (\E, \F)$ be an $(N,M)$-cluster. The \textit{volume vector} $\mathbf{m}(\X) \in \overline{\R}^{N + M}$ is defined by
        \begin{align*}
            \mathbf{m}(\X) 
            &= (\abs{\E(1)}, \dots, \abs{\E(N)}, \abs{\F(1)}, \dots, \abs{F(M)}) \\
            &= (\abs{\E(1)}, \dots, \abs{\E(N)}, \underbrace{\infty, \dots, \infty}_{M \text{ times}}).
        \end{align*}
        In dimension $n = 2$, we call $\mathbf{m}(\X)$ the \textit{area vector}.
    \end{defn}
    
So, for example, a double-bubble cluster in $\R^2$ is described as a $(2,1)$-cluster, with two disjoint chambers $\E(1),\E(2)$ of prescribed areas  $|\E(1)|=\mathbf{m}_1=A_1$, $|\E(2)|=\mathbf{m}_2=A_2$, and the exterior chamber 
$\F=\R^2 \setminus\overline{\E(1)\cup\E(2)}$ having infinite area.  The area vector is thus $\mathbf{m}=(A_1,A_2,\infty)$.
    

To define perimeter, we set the following notation for the interfaces, in other words, the boundary arcs of the chambers of the cluster:
    \begin{defn}
        The \textit{interfaces} of an $(N,M)$-cluster $\X$ are the $\Haus^{n-1}$-rectifiable sets
        \begin{equation*}
            \X(j,k) = \partial^* \X(j) \cap \partial^* \X(k), \quad 1\leq j < k \leq N + M.
        \end{equation*}
        The \textit{boundary set} $\partial \X$ of an $(N,M)$-cluster $\X$ is defined by
        \begin{equation*}
            \partial \X = \bigcup_{1 \leq j \leq N+M} \partial\X(j).
        \end{equation*}
    \end{defn}

    We are interested in configurations which minimize perimeter, but in case the number of  improper chambers $M\ge 2$ the total perimeter will necessarily be infinite.  Hence we must express the isoperimetric problems in terms of a {\it relative perimeter}:

    \begin{defn}
        The \textit{relative perimeter} of $\X$ in $F \subset \R^n$ is defined by
        \begin{equation*}
            P(\X; F) = \sum_{1 \leq j < k \leq N + M} \Haus^{n-1}(F \cap \X(j,k)).
        \end{equation*}
        The \textit{perimeter} of $\X$ is defined by
        \begin{equation*}
            P(\X) = P(\X; \R^n) = \sum_{1 \leq j < k \leq N + 1} \Haus^{n-1}(\X(j,k)).
        \end{equation*}
    \end{defn}
    
    Similarly, while it is natural to seek a global minimizer of perimeter among all clusters with given mass vector $\mathbf{m}(\E)=\mathbf{m}$, for clusters with $M\ge 2$  improper chambers we are limited to studying {\it local minimizers,} which minimize perimeter only with respect to compactly supported perturbations of $\X$. For any set $E$ of locally finite perimeter, let $\mu_E$ denote the $\R^n$-valued Radon measure (Gauss-Green measure, see \cite[Proposition 12.1]{maggi2012sets}) for which
    $$  \int_E \text{div}\, T = \int_{\R^n} T\cdot d\mu_E, \quad \text{for all $T\in C_c^1(\R^n;\R^n).$} $$
     Then, we define:

    \begin{defn}\label{locmindef}
        Given an $(N,M)$-cluster $\X$ in $\R^n$, we say that $\X$ is a \textit{locally minimizing $(N,M)$-cluster in $\R^n$} if $\spt \mu_{\X(j)} = \partial X(j)$ for every $1 \leq j \leq N + M$, and, moreover,
        \begin{equation*}
            P(\X; B_r) \leq P(\X'; B_r)
        \end{equation*}
        for each ball $B_r$ of radius $r > 0$ centred at the origin, and every $(N,M)$-cluster $\X'$ satisfying $\mathbf{m}(\X') = \mathbf{m}(\X)$ and
        \begin{equation*}
            \X'(j) \Delta \X(j) \subset \subset B_r \quad\text{for each } 1 \leq j \leq N+M.
        \end{equation*}        
    \end{defn}



    \begin{defn}
        By an \textit{$(N,M)$-partitioning problem in $\R^n$ we mean a variational problem where given a volume vector $\mathbf{m} = (\mathbf{m}_1, \dots, \mathbf{m}_{N+M}) \in \overline{\R}^{N + M}$ with 
        \begin{align*}
            \begin{cases}
                0 < \mathbf{m}_j < \infty & \text{if } 1 \leq j \leq N, \\
                \mathbf{m}_j = \infty & \text{if } N+1 \leq j \leq N+M,
            \end{cases}
        \end{align*}
        we seek an $(N,M)$-cluster $\X$ which is a local minimizer in the sense of Definition \ref{locmindef}.}
        
    \end{defn}
    
The resolution of the double-bubble conjecture by \cite{Foisy_etal,HutchingsMorganRitoreRos}  thus proves that for any choices of $A_1,A_2>0$, the $(2,1)$-partitioning problem in $\R^n$ with volume vector $\mathbf{m}=(A_1,A_2,\infty)$ is a double-bubble, consisting of spherical interfaces meeting at 120 degree angles at the junctions.  Such a configuration is both a global and local minimizer of the perimeter.

%

\subsection*{The Main Result}

    \begin{defn}\label{lensdef}
        The \textit{standard lens cluster} in $\R^2$ is the 
        $(1,2)$-cluster $\X_\infty = (\E,\F)$ 
        whose chambers are given by
        \begin{gather*}
            \E(1) = \{(x,y) : \abs{x} \leq \tfrac{\sqrt{3}}{2} R,\; \abs{y} \leq u(x)\}, \\
            \F(1) = \{y \geq 0\} \setminus \Int(\E(1)), \quad
              \F(2) = \{y \leq 0\} \setminus \Int(\E(1)),
        \end{gather*}
        where $u : [-\frac{\sqrt{3}}{2} R,\frac{\sqrt{3}}{2} R] \to \R$ is determined by 
        \begin{align*}
            u(x) &= \sqrt{R^2 - x^2} - \frac{1}{2} R,
        \end{align*}
        and the radius of curvature $R$ is determined by 
        \begin{equation}\label{Rdef}
            R^{-1} = \sqrt{\frac{2\pi}{3} - \frac{\sqrt{3}}{2}}.
        \end{equation}
        An elementary computation shows that $\abs{\E(1)} = 1$.
    \end{defn}   
    Note that the two interfaces of $\E(1)$ are circular arcs of the same radius $R$, and the center of each arc is located on the midpoint $(0,\pm{\tfrac{R}{2}})$ of the opposite arc.

        In this article, we prove the 
    \begin{thm}[Main Result]  \label{mainresult}
        The standard lens cluster is (up to translation and rotational symmetry) the unique locally minimizing cluster for the   $(1,2)$-partitioning problem in $\R^2$, with area vector $\mathbf{m}=(1, \infty, \infty)$.
    \end{thm}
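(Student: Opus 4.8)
\medskip
\noindent\textbf{Proof strategy.} We prove two things: \textup{(A)} every locally minimizing $(1,2)$-cluster with area vector $(1,\infty,\infty)$ coincides, up to a rigid motion of $\R^2$, with the cluster of Definition~\ref{lensdef}; and \textup{(B)} that cluster is itself a locally minimizing cluster. For \textup{(A)}, fix such a local minimizer $\X=(\E,\F)$. In any ball $\X$ is a perimeter-minimizing cluster in the classical sense of \cite{maggi2012sets}, so the interior regularity theory for planar minimizing clusters applies locally: $\partial\X$ is a locally finite union of $C^\infty$ arcs of constant curvature (circular arcs and line segments) meeting only in triples at $120^\circ$, and the curvature of $\X(j,k)$ equals a pressure difference $\lambda_j-\lambda_k$. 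By Proposition~\ref{finiteper} the proper chamber $\E(1)$ has finite perimeter, and a standard truncation-and-reattachment competitor (cut off $\E(1)$ far from the origin, restore the lost area nearby) forces $\E(1)$ to be bounded; hence $\partial\X$ is a \emph{finite} union of arcs. Only $|\E(1)|$ is constrained, and $\X(2,3)$ cannot contain an unbounded circular arc, so the pressures may be normalised so that $\lambda_{\F(1)}=\lambda_{\F(2)}=0$; writing $\lambda:=\lambda_{\E(1)}$, the interface $\X(2,3)=\partial^*\F(1)\cap\partial^*\F(2)$ is a union of segments and rays, while $\partial\E(1)=\X(1,2)\cup\X(1,3)$ is a union of circular arcs all of curvature $\lambda$. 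Standard facts about pressures in minimizing clusters (the unbounded chambers carry the least pressure) give $\lambda\ge 0$.

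We next locate the axis and the triple junctions. Comparing $\X$ inside a large ball $B_r\supset\E(1)$ with the competitor that straightens $\X(2,3)$ across $B_r$ yields the linear bound $P(\X;B_r)\le 2r+C$ for a constant $C$; since $\X(2,3)$ consists of segments and rays with corners only at triple junctions, and any configuration escaping to infinity in more than one direction, or along more than two rays, would violate this bound, $\X(2,3)$ lies on a single straight line $\ell$ and equals $\ell\setminus K$ for some bounded set $K$. Every triple junction involves all three chambers, and $\F(1)$ meets $\F(2)$ only along $\ell$, so \emph{all triple junctions lie on $\ell$}; in particular $\partial^*\E(1)\cap\ell$ has zero length, i.e.\ $\ell$ meets $\E(1)$ only through its interior or its exterior. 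Further comparisons show $\E(1)$ is indecomposable: a component of $\E(1)$ disjoint from $\ell$ carries no triple junction, hence is a round disk, and replacing it by the standard lens of the same area attached to $\ell$ strictly decreases $P(\X;B_r)$ because a lens encloses a given area against a straight line more efficiently than a disk does (a scale-invariant numerical inequality); likewise two components meeting $\ell$ can be merged along $\ell$ for a strict decrease, by strict concavity of $a\mapsto\sqrt a$. So $\E(1)$ is a connected bounded set whose boundary is an embedded finite union of circular arcs of radius $1/\lambda$ meeting $\ell$ at $120^\circ$ junctions.

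The decisive point is that $\E(1)$ is \emph{convex}. Traversing $\partial\E(1)$ with $\E(1)$ on the left, every arc turns weakly to the left (the curvature $\lambda\ge 0$ is taken with respect to the inward normal) and every triple junction is a convex corner (interior angle $120^\circ$, turning $+60^\circ$), so the total turning $2\pi$ is accumulated monotonically, and a closed curve that never turns right bounds a convex set. A convex set meets the line $\ell$ in a single segment $[p,q]$, and its boundary meets $\ell$ in exactly $\{p,q\}$; since arcs of $\partial\E(1)$ can meet only at triple junctions and those lie on $\ell$, $\partial\E(1)$ is made of exactly two arcs of radius $1/\lambda$, one above $\ell$ from $p$ to $q$ and one below. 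If $\lambda=0$ these are segments and must coincide, contradicting $|\E(1)|=1$; so $\lambda>0$ and $R:=1/\lambda$ gives a genuine lens. The $120^\circ$ condition at $p$ and at $q$ forces the two arcs to be mirror images across $\ell$ meeting $\ell$ at $60^\circ$, hence each subtends central angle $120^\circ$ and $|pq|=\sqrt3\,R$; imposing $|\E(1)|=R^{2}\bigl(\tfrac{2\pi}{3}-\tfrac{\sqrt3}{2}\bigr)=1$ recovers exactly the radius \eqref{Rdef}. This proves \textup{(A)}.

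For \textup{(B)} we degenerate from the double bubble. For $A>0$ let $\D_A$ be the standard double bubble with area vector $(1,A,\infty)$, which by \cite{Foisy_etal,HutchingsMorganRitoreRos} is a global---hence local---minimizer; after recentring on the unit chamber, as $A\to\infty$ the $A$-chamber flattens to a half-plane and $\D_A$ converges to the lens cluster $\X_\infty$ of Definition~\ref{lensdef}, with local convergence of the interfaces and of their lengths. Were $\X_\infty$ not a local minimizer, there would be a ball $B_{r_0}$ and a competitor $\X'$ with $\X'(j)\,\Delta\,\X_\infty(j)\subset\subset B_{r_0}$, $\mathbf{m}(\X')=(1,\infty,\infty)$, and $P(\X';B_{r_0})\le P(\X_\infty;B_{r_0})-\delta$ for some $\delta>0$. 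Splicing $\X'$ into $\D_A$ inside $B_{r_0}$ preserves $|\E(1)|=1$, changes $|\E(2)|$ by a bounded amount, and changes the relative perimeter in $B_{r_0}$ by $-\delta+o(1)$ as $A\to\infty$; restoring $|\E(2)|$ by a far-field adjustment near the diverging boundary of the $A$-chamber costs only $o(1)$ of perimeter. For $A$ large this is an admissible competitor strictly beating $\D_A$, a contradiction; hence $\X_\infty$ is a local minimizer, which with \textup{(A)} proves Theorem~\ref{mainresult}. I expect the main obstacles to be, in \textup{(A)}, the behaviour at infinity---making the linear growth bound precise enough to pin $\X(2,3)$ to a single line---together with the convexity argument that collapses the otherwise large list of candidate finite-cluster topologies; and, in \textup{(B)}, quantifying the $o(1)$ errors, i.e.\ the exact sense in which $\D_A\to\X_\infty$.
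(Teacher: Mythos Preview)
Your part \textup{(B)} is the paper's own strategy: degenerate the double bubble $\D_A$ to the lens and splice a putative improving competitor back into $\D_A$ to contradict the double-bubble theorem. The paper carries this out with more bookkeeping---it distinguishes two cases according to whether $|\D_A(2)\cap K|\lessgtr|\X(2)\cap K|$, in the second case using small vertical translates of $\D_A$ to match areas exactly, and it invokes the ``given-or-greater-area'' version of the double-bubble theorem rather than a far-field area repair---but the architecture is identical.

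Your part \textup{(A)} is a genuinely different route. The paper argues component-by-component: it first shows (by a translate-until-tangent trick) that the infinite chambers have no bounded components; then, for each connected component $\Omega$ of $\M(1)$, it shows $\partial\Omega$ has exactly two triple junctions via a length-comparison due to Morgan (if there were $2n$ junctions, each of the $n$ components of $\M(2)$ and of $\M(3)$ adjacent to $\Omega$ contributes at least $R-r$ of perimeter in an annulus, and for $n\ge2$ this beats the $\le\pi R$ cost of reassigning $\M(3)\cap B_R$ to $\M(2)$). From there each component is a congruent small lens, all lenses lie on the same axis, and sliding them into contact produces an illegal junction, forcing a single lens. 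Your alternative is to first pin $\X(2,3)$ to a single line $\ell$, then observe that all triple junctions lie on $\ell$, and finally use a \emph{convexity} argument---monotone tangent angle, since all boundary arcs of $\E(1)$ have curvature $\lambda\ge0$ and every corner turns $+60^\circ$---to conclude $\partial\E(1)$ meets $\ell$ in exactly two points. This is an elegant replacement for the paper's Morgan-type counting argument, and your scaling/concavity comparisons replace the paper's translate-to-contact tricks.

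Two gaps deserve attention. First, your convexity step tacitly assumes $\E(1)$ is \emph{simply connected}: the monotone-tangent argument needs $\partial\E(1)$ to be a single Jordan curve with total turning $2\pi$, which fails if there is an island of $\F(1)$ or $\F(2)$ inside $\E(1)$. The paper handles this explicitly (its Step~5: bounded components of the infinite chambers can be reassigned or slid to a tangency, contradicting minimality); you should insert the analogous step before invoking convexity. Second---and you flag this yourself---the passage from the linear bound $P(\X;B_r)\le 2r+C$ to ``$\X(2,3)$ lies on a single line'' is not yet a proof: to get the competitor you need $\X$ and the straightened cluster to agree on $\partial B_r$, which you do not control a priori. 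The paper sidesteps this by first analysing each lens component and only afterwards arguing that two distinct axes would force extra rays of $\X(2,3)$, hence (via the no-bounded-components step) a contradiction. Either adopt that order of operations, or make your competitor precise by connecting the actual transition points of $\X(2,3)\cap\partial B_r$ pairwise by chords and bounding their number.
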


The proof of Theorem~\ref{mainresult} will be in two parts.  First, we show that the standard lens cluster is a local minimizer, in the sense of Definition~\ref{locmindef}.  For this, we use a strategy suggested to us by Frank Morgan, thinking of the lens cluster as a limit of double-bubbles as the area of one chamber diverges to infinity.  This limit is described in  Section 2.  Next, we show that any local minimizer must reduce to the standard lens cluster, by a sequence of constructions which would reduce the local perimeter were the configuration to have a different geometry.  The proof of Theorem~\ref{mainresult} is contained in Section 3.   Some conjectures on the geometry of minimizing $(N,M)$-clusters for larger values of $M,N$ are included in  Section 4.
    
    \section{A Convergence Result}

In this section we show that the standard lens cluster may be obtained as a local limit of double bubbles with a very large chamber.  We first define a weak notion of convergence for clusters.

    \begin{defn}\label{distancedef}
        Let $N,M,N',M'$ be such that $N + M = N' + M'$. The \textit{cluster distance} in $F \subset \R^n$ between an $(N,M)$-cluster $\X$ and an $(N', M')$-cluster $\X'$ in $\R^n$ is defined as
        \begin{equation*}
            d_F(\X, \X') = \sum_{j = 1}^{N + M} \abs{F \cap (\X(j) \Delta \X'(j))}.
        \end{equation*}
        We set $d(\X, \X') = d_{\R^n}(\X, \X')$.  Notice that in general we could have $d(\X, \X') = \infty$.
    \end{defn}

    \begin{defn}
        \, Fix $N,M$ and let $\X$ be an $(N', M')$-cluster such that $N' + M' = N + M$.
        \begin{itemize}
            \item We say that a sequence of $(N, M)$-clusters $\{\X_{k}\}_{k \in \N}$ in $\R^n$ \textit{locally converges to $\X$}, and write $\X_k \xrightarrow{\mathrm{loc}} \X$, if for every compact set $K \subset \R^n$ we have $d_K(\X_k, \X) \to 0$ as $k \to \infty$.

            \item We say that $\{\X_k\}_{k \in \N}$ \textit{converges to $\X$}, and write $\X_k \to \X$, if $d(\X_k, \X) \to 0$ as $k \to \infty$.
        \end{itemize}
    \end{defn}

    \begin{remark}
        For clusters with $M \geq 2$, we can in general only talk about local convergence. Note that cluster type need not be preserved under cluster convergence, and cluster distance is indexing-dependent.
    \end{remark}

%
%

    The key idea in the existence part of the proof of the main result is that the standard lens cluster is the limit in local cluster convergence of a suitably chosen family of standard double bubbles (with their exteriors). We make this precise below.

    \begin{lem}
        If $\X$ is an $(N,M)$-cluster in $\R^n$, then for every $F \subset \R^n$ we have
        \begin{equation*}
            P(\X; F) = \frac{1}{2} \sum_{1 \leq j \leq N+M} P(\X(j); F).
        \end{equation*}
        In particular, if $U$ is open in $\R^n$ and $\X_k \xrightarrow{\mathrm{loc}} \X$, then
        \begin{equation*}
            P(\X; U) \leq \liminf_{k \to \infty} P(\X_k; U).
        \end{equation*}
    \end{lem}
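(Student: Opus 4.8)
The plan is to derive the identity from the structure theorem for partitions into sets of locally finite perimeter: namely, that $\Haus^{n-1}$-almost every point of the boundary set $\partial\X$ lies on the reduced boundary of exactly two chambers. Once that is in hand, the first identity is pure bookkeeping, and the lower semicontinuity statement follows by applying the $L^1_{\mathrm{loc}}$-lower semicontinuity of relative perimeter chamber by chamber and recombining via the identity. Throughout, for a single set $E$ of locally finite perimeter I read $P(E;F)$ as $\Haus^{n-1}(F\cap\partial^*E)$, consistent with De Giorgi's theorem $|\mu_E|=\Haus^{n-1}\mres\partial^*E$.

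The first step is the combinatorics of boundary points. For each chamber $\X(l)$, Federer's theorem furnishes an $\Haus^{n-1}$-null set $Z_l$ outside of which every point has density $0$, $1$, or $\tfrac12$ in $\X(l)$, the density being $\tfrac12$ precisely at points of $\partial^*\X(l)$. Put $Z=\bigcup_{l=1}^{N+M}Z_l$, which is still $\Haus^{n-1}$-null as a finite union. Fix $j$ and $x\in\partial^*\X(j)\setminus Z$. By the null-overlap and full-measure axioms, $\sum_{l=1}^{N+M}|B_r(x)\cap\X(l)|=|B_r(x)|$ for every $r>0$; dividing by $|B_r(x)|$ and letting $r\to0$ (all densities exist since $x\notin Z$) gives $\sum_l\theta_l(x)=1$ with each $\theta_l(x)\in\{0,\tfrac12,1\}$ and $\theta_j(x)=\tfrac12$. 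Hence exactly one index $k\ne j$ can have $\theta_k(x)=\tfrac12$, i.e.\ $x\in\X(j,k)$, and the remaining densities vanish. In other words, $\{\X(j,k)\}_{k\ne j}$ partitions $\partial^*\X(j)$ modulo $\Haus^{n-1}$-null sets.

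Summing this over $j$ and using $\Haus^{n-1}$-additivity,
\begin{equation*}
\sum_{j=1}^{N+M}P(\X(j);F)=\sum_{j=1}^{N+M}\sum_{k\ne j}\Haus^{n-1}(F\cap\X(j,k))=2\sum_{1\le j<k\le N+M}\Haus^{n-1}(F\cap\X(j,k))=2\,P(\X;F),
\end{equation*}
since each interface $\X(j,k)$ with $j<k$ is counted once from $\partial^*\X(j)$ and once from $\partial^*\X(k)$; this is the claimed identity. For the second assertion, let $U$ be open and $\X_k\xrightarrow{\mathrm{loc}}\X$. By definition, $|K\cap(\X_k(j)\,\Delta\,\X(j))|\to0$ for every compact $K$ and every $j$, i.e.\ $\chi_{\X_k(j)}\to\chi_{\X(j)}$ in $L^1_{\mathrm{loc}}(\R^n)$. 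Since $P(E;U)=\sup\{\int_E\text{div}\,T : T\in C^1_c(U;\R^n),\ |T|\le1\}$ is a supremum of functionals $E\mapsto\int_E\text{div}\,T$, each continuous under $L^1_{\mathrm{loc}}$ convergence, we get $P(\X(j);U)\le\liminf_k P(\X_k(j);U)$ for every $j$. Both $\X$ and each $\X_k$ are clusters with $N+M$ chambers, so applying the identity to each (with $F=U$) and using superadditivity of $\liminf$ over a finite sum,
\begin{equation*}
P(\X;U)=\tfrac12\sum_{j}P(\X(j);U)\le\tfrac12\sum_{j}\liminf_k P(\X_k(j);U)\le\liminf_k\tfrac12\sum_{j}P(\X_k(j);U)=\liminf_k P(\X_k;U).
\end{equation*}

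I expect the only substantive point to be the density dichotomy of the second step — essentially Maggi's structure theorem for clusters — which rests on Federer's theorem together with the null-overlap and full-measure axioms. The remaining care is simply to confirm that the exceptional set $Z$ stays $\Haus^{n-1}$-null and that the $r\to0$ limit may be passed inside the finite sum of densities; everything else is bookkeeping and the textbook lower semicontinuity of perimeter.
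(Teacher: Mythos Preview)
Your proof is correct and is essentially the content of the paper's one-line proof, which simply cites Maggi's Proposition 29.4; you have written out in detail the Federer density dichotomy plus bookkeeping that underlies that proposition, together with the standard chamberwise lower semicontinuity argument. There is nothing to add.
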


    \begin{proof}
        This follows from a straightforward adaptation of \cite{maggi2012sets} Proposition 29.4. 
    \end{proof}

    \begin{lem}\label{lem:approximating-double-bubble}
        For $A > 0$, consider the minimizing double bubble $\D_A = \{\D_A(1), \D_A(2), \D_A(3)\}$, $\D_A(3)=\R^2\setminus\overline{\D_A(1)\cup\D_A(2)}$, with area vector $(1,A, \infty)$, oriented so that the circle supporting the interface  $\D_A(2,3)$ is tangent to the $x$-axis at the origin  and centered on a point of the positive $y$-axis; see Figure \ref{fig:approximating-double-bubble}. Then as $A \to \infty$,
        \begin{equation*}
            \D_A \xrightarrow{\mathrm{loc}} \X_\infty.
        \end{equation*}
        Furthermore, for any bounded open set $U$, 
        \[
            P(\X_\infty; U) \le \liminf_{A \to \infty} P(\D_A; U).
        \]
    \end{lem}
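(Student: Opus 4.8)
The plan is to use the known explicit geometry of the planar minimizing double bubble $\D_A$ (from \cite{Foisy_etal}) and track what happens to its three circular arcs as the area $A$ of the second chamber tends to infinity. Recall that a standard double bubble with areas $(1, A)$ consists of three circular arcs meeting at two triple junctions at $120^\circ$ angles: the arc $\D_A(1,3)$ bounding the unit chamber against the exterior, the arc $\D_A(2,3)$ bounding the area-$A$ chamber against the exterior, and the separating arc $\D_A(1,2)$. The curvatures $\kappa_1, \kappa_2, \kappa_{12}$ of these arcs satisfy the Young-type relation $\kappa_{12} = \kappa_1 - \kappa_2$ (with appropriate sign conventions) coming from the balancing of the constant "pressures" in each chamber, and the $120^\circ$ condition pins down the radii and arc lengths in terms of the two areas.

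First I would carry out the asymptotics of these quantities as $A \to \infty$. With the normalization in the statement — the circle through $\D_A(2,3)$ tangent to the $x$-axis at the origin, center on the positive $y$-axis — the arc $\D_A(2,3)$ has radius $\sim \sqrt{A/\pi} \to \infty$, so on any fixed ball $B_r$ this arc converges to the line $\{y = 0\}$, and the chamber $\D_A(2)$ fills up the lower half-plane locally. Meanwhile the unit-area chamber $\D_A(1)$ sits in the upper half-plane near the origin, attached along the $x$-axis; its two bounding arcs $\D_A(1,3)$ and $\D_A(1,2)$ must, in the limit, become two circular arcs meeting the limiting line $y=0$ at the triple junctions at $120^\circ$. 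The key computation is that the limiting configuration is forced to have $|\E(1)| = 1$ with arcs meeting a single axis at $120^\circ$ — and by the rigidity of the lens geometry (the $120^\circ$ conditions plus area constraint uniquely determine the radius $R$ via \eqref{Rdef}), the limit must be exactly $\X_\infty$. I would verify that the triple-junction points converge to $(\pm \tfrac{\sqrt 3}{2} R, 0)$ and that the two arcs converge in Hausdorff distance (on compacts) to the graphs $y = \pm u(x)$.

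Second, I would upgrade this geometric convergence to the cluster convergence $\D_A \xrightarrow{\mathrm{loc}} \X_\infty$ required by the definition: for each fixed compact $K$, I must show $|K \cap (\D_A(j) \Delta \X_\infty(j))| \to 0$ for $j = 1, 2, 3$. Since each $\D_A(j)$ is (locally) the region cut out by a finite collection of circular arcs that converge locally uniformly to the arcs cutting out $\X_\infty(j)$, the symmetric differences have areas controlled by the (uniformly small) area between nearby arcs, and dominated convergence finishes it. Finally, the lower semicontinuity statement $P(\X_\infty; U) \le \liminf_{A\to\infty} P(\D_A; U)$ for bounded open $U$ is then immediate from the previous lemma applied to the locally convergent sequence $\D_A \xrightarrow{\mathrm{loc}} \X_\infty$.

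The main obstacle I expect is the first step: extracting the precise asymptotics of the double-bubble geometry as $A \to \infty$ and proving the limit arcs satisfy the $120^\circ$ incidence with a \emph{single straight axis} (rather than, say, degenerating or drifting off to infinity). One must rule out loss of compactness — e.g.\ the unit chamber shrinking, escaping to infinity along the $x$-axis, or the arcs flattening — by using the area constraint $|\D_A(1)| = 1$ together with uniform perimeter bounds (the double bubble's perimeter is bounded as $A\to\infty$ once one subtracts the large arc, or one works with relative perimeter on compacts) and the explicit curvature relations. Once the arcs are known to converge to circular arcs meeting $y=0$ at $120^\circ$ and enclosing area $1$, uniqueness of the lens configuration does the rest.
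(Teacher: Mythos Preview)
Your proposal is correct and follows essentially the same route as the paper: use the explicit geometry of the standard double bubble (the curvature relation $\kappa_1-\kappa_2=\kappa_{12}$, which is the paper's equation $r_1^{-1}-r_2^{-1}=r_0^{-1}$, together with the $120^\circ$ junction conditions and the area equations), show that as $A\to\infty$ the large arc flattens onto the $x$-axis while the two arcs bounding the unit chamber converge to circular arcs of the unique radius $R$ from \eqref{Rdef}, and then pass from local uniform convergence of the arcs to local cluster convergence and invoke the preceding lemma for lower semicontinuity. One small slip: with the stated normalization (the circle supporting $\D_A(2,3)$ centered on the \emph{positive} $y$-axis), $\D_A(2)$ fills the \emph{upper} half-plane locally and $\D_A(1)$ straddles the $x$-axis rather than sitting in the upper half-plane; this does not affect the argument.
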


    \begin{proof}
            Fix $r > \tfrac{\sqrt{3}}{2}R$. By monotonicity of $d_F$ with respect to $F$ and boundedness of each compact set, it suffices to check that for each open ball $B_{r'}$ centered at $0$ with $r' \geq r$, we have 
        \begin{equation*}
            d_{B_{r'}}(\D_A, \X_\infty) \xrightarrow[A \to \infty]{} 0.
        \end{equation*}

        Let $r' \geq r$ be arbitrary. We aim to derive estimates on $d_{B_{r'}}(\D_A, \X_\infty)$ via geometrical information about $\D_A$ (see \cite{isenberg}).
        A double bubble is bounded by three circular arcs $C_0,C_1,C_2$ of radii $r_1, r_2$ and $r_0$, where $r_0$ is the radius of the common boundary of the two lobes of a double bubble with area vector $(m_1,m_2, \infty)$. Denote by $\theta_1, \theta_2$, and $\theta_0$ the angles associated with the three arcs, as in Figure \ref{fig:double-bubble}. The double bubble $\D_A$ corresponds to the case $m_1=1, m_2=A$.  We have fixed translation invariance by selecting $\D_A$ so that the circle $C_2$ supporting the interface  $\D_A(2,3)$ is tangent to the $x$-axis at the origin.
        Note this implies that in the limit $A\to\infty$ the large circle $C_2$ will approach the $x$-axis.
        By axial symmetry we conclude that the centers $p_0, p_1, p_2$ of the three circles each lies on the $y$-axis. We denote by $q$ either one of the triple junction points (which are placed symmetrically about the $y$-axis) where the three circular arcs meet.

        \begin{figure}[ht]
            \centering
            \includegraphics[scale=0.47]{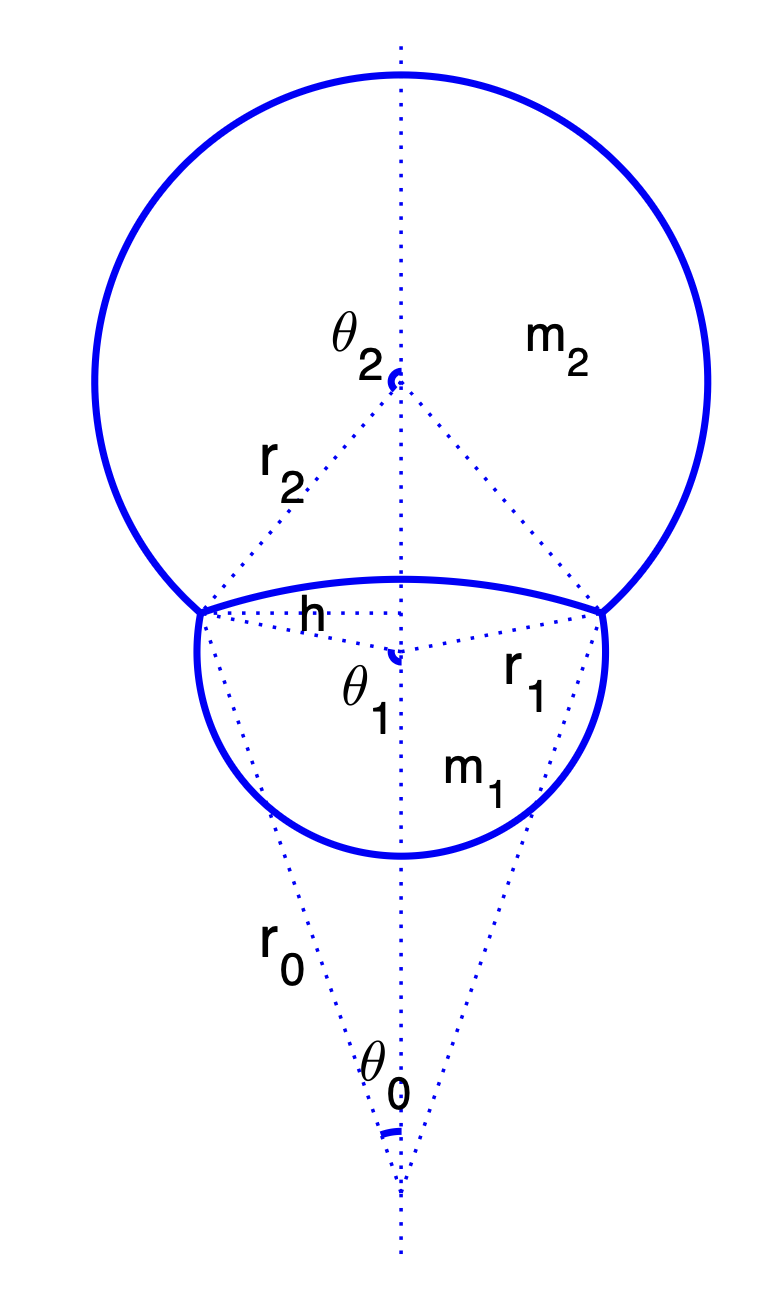}
            \caption{An asymmetric double bubble in $\R^2$ \cite{alama2021periodic}}
            \label{fig:double-bubble}
        \end{figure}
        
        The equations relating areas, angles, and radii are 
        \begin{eqnarray}
            \label{e1A}
            r_1^2 (\theta_1 - \cos \theta_1 \sin \theta_1) + r_0^2 (\theta_0 - \cos \theta_0 \sin \theta_0)  &=& 1, \\
            \label{e2A}
            r_2^2 (\theta_2 - \cos \theta_2 \sin \theta_2) - r_0^2 (\theta_0 - \cos \theta_0 \sin \theta_0)  &=& A, \\
            \label{e3A}
            r_1 \sin \theta_1 &=& r_0 \sin \theta_0, \\
            \label{e4A}
            r_2 \sin \theta_2 &=& r_0 \sin \theta_0, \\
            \label{e5A}
            (r_1)^{-1} - (r_2)^{-1} &=& (r_0)^{-1},\\
            \label{e6A}
            \cos \theta_1 + \cos \theta_2 + \cos \theta_0 &=& 0.
        \end{eqnarray}
        As $A\to \infty$, $r_2\to \infty$ and  $\theta_2\to \pi$. Hence from equations (\ref{e5A}) and (\ref{e3A}), we see that
        \begin{equation}\label{rdiff}
        |r_1-r_0|\to 0
        \end{equation}
        and 
        \begin{equation}\label{thetadiff}
        |\theta_1-\theta_0|\to 0.
        \end{equation}
        
        Since $\D_A$ is a minimizing double bubble, the tangents to the arcs meet at angles of $2\pi/3$.  Now form the triangle $\triangle p_0 p_2 q$.  The edges $\overline{p_0 q}$ and $\overline{p_2 q}$ are radii of the circular arcs, and thus are normal lines to the transition curves at $q$.  They must therefore meet at angle $2\pi/3$, and so adding the interior angles of the triangle $\triangle p_0 p_2 q$ we obtain $\theta_0=\theta_2- {\frac{2\pi}{3}}$.  As a consequence, we conclude that $\theta_0$ is convergent, $\theta_0\to{\frac{\pi}{3}}$ as $A\to\infty$. From \eqref{thetadiff} we also have  $\theta_1\to{\frac{\pi}{3}}$.  

        To pass to the limit in the radii, we note that $|\D_A(1)|=1$, and so $0<r_1<1/\sqrt{\pi}$ holds for all $A$.  By \eqref{rdiff} both $r_0, r_1$ are bounded, so there exists a subsequence along which both converge, to the same limiting value.  Using equation (\ref{e1A}) we conclude that $r_1^{-1}\to
        \sqrt{2\pi/3-\sqrt{3}/2}=:R^{-1}$ which yields the radius of the circles in the standard lens cluster $\X$. From \eqref{rdiff} we also have $r_0^{-1} \to R^{-1}$. 

        To show convergence of the centers of the circles $C_0, C_1$ we first note that the large circle $C_2$ converges locally uniformly to the $x$-axis
        in the sense of parametrized curves. As the centers of $C_0,C_1$ must lie on the $y$-axis, and the radii $r_0,r_1$ converge, the triple point $q$ remains bounded, and since it also lies on $C_2$, passing to a further subsequence if necessary, it converges to a point on the $x$-axis (see Figure \ref{fig:endpf}). The center $p_0$ (resp.\ $p_1$) must lie on the intersection of the $y$-axis with the circle of radius $r_0$ (resp.\ $r_1$) centered at $q$, which consists of two points. In the limit, $r_0,r_1\to R$ and the centers converge to the two points lying on the intersection of the $y$-axis and the circle of radius $R$ centered at the limiting triple point $q$. The same is true of the circle of radius $R$ centered at the opposing triple point. The centers $p_0,p_1$ lie at the intersection of these two circles, and are symmetrically placed with respect to the line connecting the two triple points.  Now consider the isosceles triangle $\triangle p_0 p_1 q$.  Again, the radius vectors of the circles are normal vectors to the curves.  As the tangents meet at angle $2\pi/3$, the angle $\angle p_0 q p_1$ at the triple point is $\pi/3$, and therefore  $\triangle p_0 p_1 q$ is equilateral, of length $R$.  In particular, the segment joining $p_0$ and $p_1$ has length $R$ and we conclude that $p_0=(0,-R/2)$, $p_1=(0,R/2)$ and $q=\frac{\sqrt{3}}{2} R$.

        \begin{figure}
            \centering
            \includegraphics[scale=1.2]{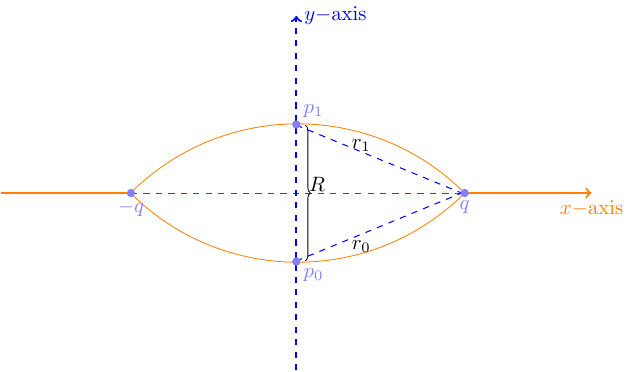}
            \caption{Convergence to the standard double-bubble}
            \label{fig:endpf}
        \end{figure}

        Finally, the boundary arcs being circular, they represent parametrized piecewise $C^\infty$ arcs.  We have already noted that the large circle $C_2$ converges locally uniformly to the $x$-axis.  Since the radii and centers of $C_0,C_1$ converge, each circle converges in the sense of parametrized curves to a limiting circle, and so each of the arcs in the double bubble $\D_A$ converges, locally uniformly in the sense of parametrized curves, to a circular arc. The local uniform convergence then implies the convergence of the associated clusters in the distance defined by Definition~\ref{distancedef}. 
        
        Since  $r' \geq r > \tfrac{\sqrt{3}}{2}R$, the lens is inside $B_{r'}$ and the conclusion follows.
    \end{proof}

%
%

    \section{Proof of Theorem \ref{mainresult}}
    
We now prove that the standard lens cluster is the unique locally minimizing $(1,2)$-cluster, up to rigid motions of the plane, with area vector $\mathbf{m}=(1,\infty,\infty)$.

\subsection*{Minimality of  $\X_\infty$}

We first show existence of a local minimizer:  in particular, that  $\X_\infty$ as defined in Definition~\ref{lensdef} is a local minimizer of the perimeter functional.
The strategy is to argue by contradiction and assume that there is a compactly supported perturbation of the lens cluster which decreases perimeter.  If so, we construct a competitor for the double-bubble problem with a large chamber, which would have less perimeter than a minimizing double bubble.  The argument relies on the following corollary of the double bubble theorem:

    \begin{thm}[\cite{Foisy_etal}]\label{lem:given-or-greater-area}
    Fix $A>0$, and consider the family of planar partitioning problems,
        \begin{equation*}
            \inf \{ P(\E) : \mathbf{m}(\E) = (1, A',\infty), \ A'\geq A\},
        \end{equation*}
        where $\E$ ranges over all planar $(2,1)$-clusters with area vector $(1, A',\infty)$. Then, up to translation and rotation, the unique minimizer of this problem is the standard double bubble $\D_A$ with area vector $(1,A,\infty)$. 
   \end{thm}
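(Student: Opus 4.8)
The plan is to reduce the statement to the classical double bubble theorem in $\R^2$ (\cite{Foisy_etal}) together with strict monotonicity of the minimal double-bubble perimeter in the second area. For each fixed $A'>0$, the double bubble theorem asserts that among all $(2,1)$-clusters $\E$ with $\mathbf m(\E)=(1,A',\infty)$ the perimeter $P(\E)$ is minimized, uniquely up to rigid motion, by the standard double bubble $\D_{A'}$ (infinite-perimeter competitors being excluded trivially). Writing $\phi(A'):=P(\D_{A'})$, any admissible competitor $\E$ for the problem in the statement (so $\mathbf m(\E)=(1,A',\infty)$ with $A'\ge A$) satisfies $P(\E)\ge \phi(A')$, with equality iff $\E$ is congruent to $\D_{A'}$. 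Hence everything comes down to showing that $\phi$ is strictly increasing on $(0,\infty)$: granting this, $P(\E)\ge \phi(A')\ge \phi(A)=P(\D_A)$, with both inequalities equalities iff $\E\cong\D_{A'}$ and $A'=A$, i.e.\ iff $\E\cong\D_A$; and since $\D_A$ is itself admissible ($A\ge A$), the infimum equals $P(\D_A)$ and is attained only at $\D_A$.

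To prove $\phi(A')>\phi(A)$ for $A'>A>0$ I would exhibit a competitor for the area vector $(1,A,\infty)$ with perimeter strictly below $\phi(A')$, and then apply the double bubble theorem once more to conclude $P(\D_A)\le(\text{that competitor's perimeter})<\phi(A')$. The competitor is built from $\D_{A'}$ by a \emph{chord cut}: the chamber of $\D_{A'}$ of area $A'$ is bounded by an arc of its interface with the other chamber together with a genuine circular arc $\alpha$ separating it from the exterior; replacing $\alpha$ by the chord $c$ joining its endpoints excises from this chamber the circular cap $R$ between $\alpha$ and $c$, and since $\alpha$ subtends an angle in $(0,2\pi)$ one has $|c|<|\alpha|$, so the total perimeter strictly decreases while the remaining interfaces are untouched and the exterior simply absorbs $R$. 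Running a one-parameter family of such caps and using the intermediate value theorem, one can arrange $|R|=A'-A$ exactly, so the modified cluster has area vector $(1,A,\infty)$, which gives $\phi(A)=P(\D_A)\le P(\text{competitor})<P(\D_{A'})=\phi(A')$.

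The main technical point, and the place where care is needed, is this last construction: one must verify that a cap of \emph{every} prescribed area in $(0,A')$ can be excised by a chord that meets neither the shared interface nor the other chamber, so that the modified configuration really is an admissible $(2,1)$-cluster with the advertised area vector. This is routine but requires distinguishing the regime where $A'-A$ is a small fraction of $A'$ (take a small cap near the point of $\alpha$ farthest from the triple junctions) from the regime where $A'-A$ is close to $A'$ (slide the chord toward the segment joining the two triple junctions, which lies interior to the large chamber precisely when that chamber is the larger lobe, automatic once $A'>1$); continuity of the excised area in the chord parameter then closes the gap. Alternatively one can bypass the construction and deduce strict monotonicity of $\phi$ directly by differentiating in $A'$ in the explicit algebraic relations \eqref{e1A}--\eqref{e6A} governing $\D_{A'}$; I would adopt whichever of the two is shorter to write cleanly.
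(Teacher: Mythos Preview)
The paper does not supply a proof of this theorem at all: it is stated as a corollary of the planar double-bubble theorem and attributed directly to \cite{Foisy_etal}, with no argument given. So there is no ``paper's own proof'' to compare against; your proposal is a self-contained derivation of the cited fact.

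Your reduction is the natural one and is correct: for each fixed $A'$ the double-bubble theorem gives $P(\E)\ge\phi(A')$ with equality only for $\D_{A'}$, so the problem collapses to showing $\phi$ is strictly increasing. Two remarks on the monotonicity step. First, the chord-cut competitor is fine for proving $\phi(A')>\phi(A)$ when $A'-A$ is not too large, but your case analysis for large $A'-A$ is more delicate than you indicate: when $A'\le 1$ the shared interface bulges \emph{into} the second chamber, so the chord joining the triple junctions is not interior to $\D_{A'}(2)$, and even when $A'>1$ the maximal cap cut by a chord of the outer arc $\alpha$ has area strictly less than $A'$ (it misses the lune between the chord and the shared arc). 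A cleaner route is to observe that you never need a single cut of size $A'-A$: it suffices that $\phi$ be strictly increasing, and for that a \emph{local} argument is enough---either a small chord cut (which always exists near the outer pole of $\alpha$) to get $\phi(A'-\delta)<\phi(A')$ for small $\delta$, together with continuity of $\phi$, or the first-variation identity $\phi'(A')=1/r_2>0$ from the Lagrange multiplier. Second, the alternative you mention---differentiating \eqref{e1A}--\eqref{e6A}---is exactly this first-variation computation and is the shortest way to close the argument.
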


        From now on,  we will refer to the standard double bubble  $\X_\infty$ as $\X$. Assume toward a contradiction that $\X$ is \textit{not} a locally minimizing $(1,2)$-cluster for the area vector $(1, \infty, \infty)$.  Negating Definition \ref{locmindef}, it follows that there exists an open ball $B_r$ of radius $r > 0$ centered at $0$ and a  $(1,2)$-cluster $\X'$ such that the area vectors of $\X=\X_\infty$ and $\X'$ agree, 
        \[
            \mathbf{m}(\X') = \mathbf{m}(\X),
        \]
        the modifications are compactly contained in $B_r$, 
        \begin{equation}\label{eq:symmetric-differences}
            \X(j) \Delta \X'(j) \subset \subset B_r \text{ for each } 1 \leq j \leq 3,
        \end{equation}
        and $\X'$ improves on $\X$ within $B_r$,
        \begin{equation*}
            P(\X'; B_r) < P(\X; B_r).
        \end{equation*}
        
        
         By \eqref{eq:symmetric-differences}, it follows that $\X(1), \X'(1) \subset \subset B_r$. Note that we make no \textit{a priori} assumptions about the regularity of the interfaces of $\X'$ within $B_r$. Denote by $K$ a compact set such that
        \begin{equation*}
            \X(j) \Delta \X'(j) \subset K \subset B_r \quad \text{for each } 1 \leq j \leq 3.
        \end{equation*}
        By enlarging $K$ if necessary, we can assume that $K$ is a closed ball centered at $0$ containing $\X(1)$ and $\X'(1)$ in its interior.


        Since $\mathbf{m}(\X') = \mathbf{m}(\X)$, we have that $\abs{\X'(1)} = 1$. One of $\X'(2) \cap B_r$ or $\X'(3) \cap B_r$ must have the same or strictly less relative area within $B_r$ compared to $\X(2) \cap B_r$ or $\X(3) \cap B_r$, respectively. As $\X$ is symmetric about the $x$-axis, without loss of generality we can assume that the upper region $\X'(2) \cap B_r$ has area at least that of $\X(2) \cap B_r$. 
        Since $\X$ and $\X'$ only differ in $K$, we can assume,
        \begin{equation}\label{*}
            \abs{\X'(2) \cap K} \geq \abs{\X(2) \cap K}.
        \end{equation}
        Set 
        \begin{equation}\label{eps}\eps = P(\X; B_r) - P(\X'; B_r) > 0.
        \end{equation}

        Consider the one-parameter family of $(2,1)$-clusters $\D_A$ associated with the minimizing double-bubbles defined in Lemma \ref{lem:approximating-double-bubble}, with area vectors $(1, A, \infty)$; see Figure \ref{fig:approximating-double-bubble} for a diagram.  By Lemma~\ref{lem:approximating-double-bubble}, we know that $\D_A$ locally converges to $\X$ in the sense of cluster distance as $A \to \infty$, i.e.,
        \begin{equation*}
            \D_A \xrightarrow{\mathrm{loc}} \X,
        \end{equation*}
        and furthermore that
        \begin{equation*}
            P(\X; U) \le \liminf_{A \to \infty} P(\D_A; U)
        \end{equation*}
        for any bounded open set $U$. Note that for all $A$ sufficiently large, $\partial \D_A(2) \cap B_r$ consists of piecewise smooth circular arcs which are graphs over the $x$-axis. By construction of $\D_A$, we fix translation invariance so that the large circle $C_2$  is tangent to the axis at the origin. Note in fact that the graphs forming $\partial \D_A(2) \cap B_r$ are positive functions.

        \begin{figure}
            \centering
            \includegraphics[scale=0.425]{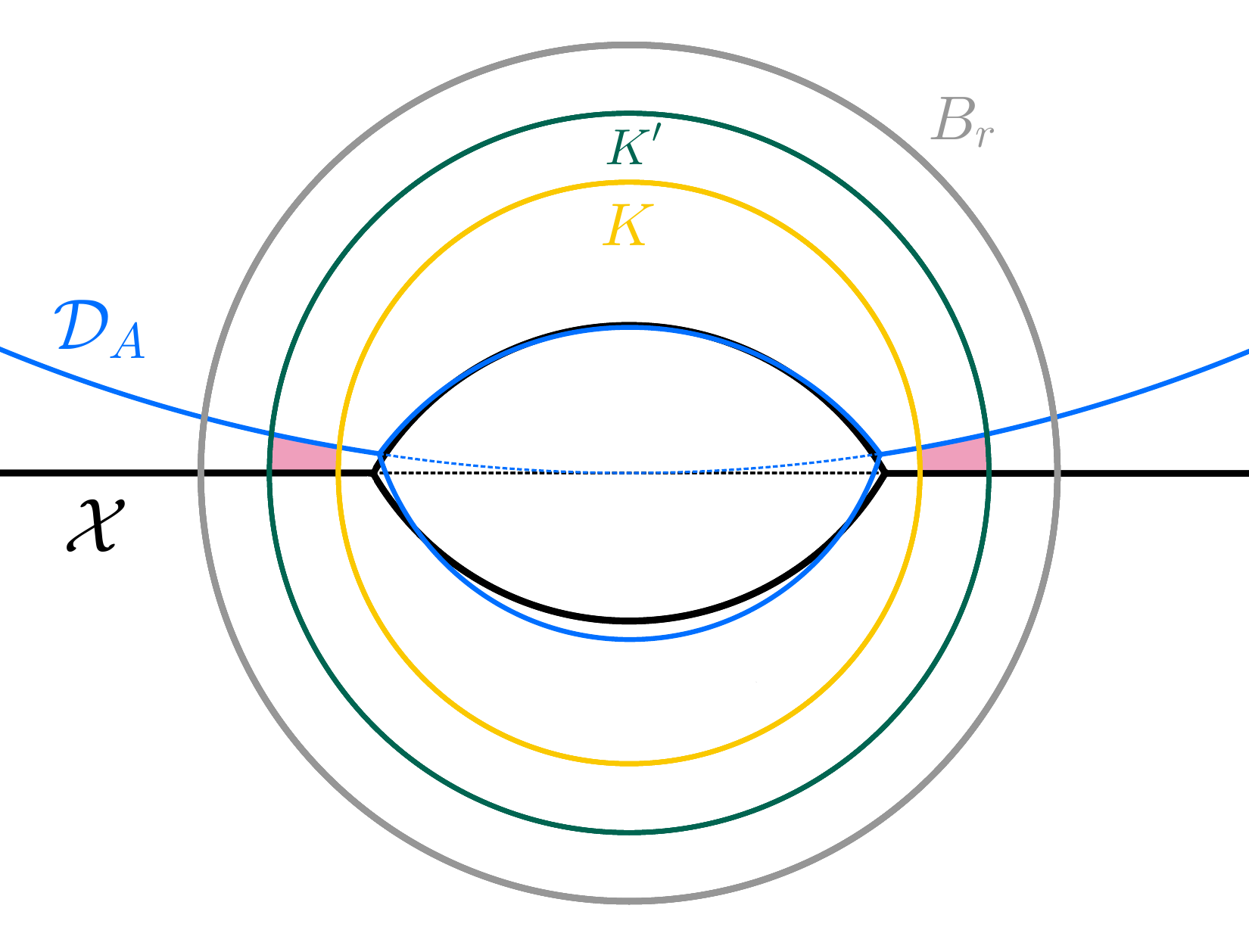}
            \caption{The $(2,1)$-cluster $\D_A$ (in blue) locally approximates the standard lens cluster $\X$ (in black) for large $A$. The lens $\X(1)$ is contained in nested sets $K$ (in yellow), $K'$ (in green), and $B_r$ (in grey). The region $\mathcal{R}_A$ to be added to \ $\D_A(2)$ is shaded (in pink)} 
            \label{fig:approximating-double-bubble}
        \end{figure}

        Let $K'$ be a closed ball concentric with $K$ and $B_r$ such that $K \subset K' \subset B_r$. 
        Define the open annulus $\mathcal{A} = \Int(K') \setminus K$ and the set $\mathcal{R}_A = \mathcal{A} \cap (\X(2) \setminus \D_A(2))$.
        We consider two cases. First suppose that for $ A$ sufficiently large, the area of $\D_A(2)\cap K$ is smaller than the area of $\X(2)\cap K$, that is, for some $A_0>0$ we have 
        \begin{equation}\label{**}
            \abs{\D_A(2) \cap K} \leq \abs{\X(2) \cap K}, \ \text{for all } A\geq A_0.
        \end{equation}

        In this case, we use the variation $\X'$ to define a competitor $\D'_A$ to the double bubble $\D_A$ in the following way:  $\D'_A$ coincides with $\X'$ in $K$, and agrees with the double-bubble $\D_A$ outside of $K'$.  In the annular region $\mathcal{A}=K'\setminus K$ we connect the two by taking $\D'_A(2)=\D_A(2)\cup \mathcal{R}_A$ while $\D'_A(3)=\D_A(3)\setminus \mathcal{R}_A$ (see Figure \ref{fig:approximating-double-bubble}). 
         (Note that the area of the chamber $\D_A'(2)$ may not be exactly equal to $A$, but we will see below that it is at least as large.) In constructing this competitor, we have modified the perimeter in a predictable way, namely we have added the small arc $\partial \mathcal{R}_A\cap \partial K'$ and replaced the arc $\D_A(2)\cap \mathcal{A}$ with  the segments of the $x$-axis contained in the annular region $\mathcal{A}$. By choosing $A$ sufficiently large, we can make this smaller than $\frac{\eps}2$ with $\eps$  as defined by (\ref{eps}) and obtain:  
        \begin{align*}
            P(\D_A'; B_{r})&\leq P(\X';B_{r})+ \frac{\eps}2\\
            &\leq P(\X; B_r)-\eps + \frac{\eps}2\\
            &< \liminf_{A\to \infty} P(\D_A; B_r).
        \end{align*}

        This means that we can find some area $A\geq A_0$ for which $P(\D_A'; B_r)<P(\D_A; B_r)$. On the other hand, we have also modified the area of this competitor in a predictable way, and using (\ref{*}) and (\ref{**}) we find: 
        \begin{align*}
            \abs{\D_A'(2)}&=\abs{\D_A'(2)\cap K}+\abs{\D_A'(2)\cap \mathcal{A}}+\abs{\D_A'(2)\setminus {K'}}\\
            &\geq \abs{\X'(2) \cap K}+ \abs{\D_A(2)\cap \mathcal{A}} +\abs{\D_A(2)\setminus{K'}}\\
            &\geq \abs{\X(2) \cap K} +\abs{\D_A(2)\setminus K} \\
            &\geq \abs{\D_A(2)\cap K} +\abs{\D_A(2)\setminus K}\\
            &=\abs{\D_A(2)} = A.
        \end{align*}

        This contradicts the minimality of the double-bubble $\D_A$ as per Theorem \ref{lem:given-or-greater-area} and thus (\ref{**}) does not hold.

        In case (\ref{**}) fails, we can find a sequence $A_n\to \infty$ for which:
        \begin{equation}\label{***}        \abs{\D_{A_n}(2)\cap K}> \abs{\X(2)\cap K}.
        \end{equation}

        Consider the vertical translates $\D_{A_n}+(0,t)$ of $\D_{A_n}$. As $\partial \D_{A_n}(2)\cap B_r$ consists of graphs of positive functions, the area $\abs{(\D_{A_n}(2)+(0,t))\cap B_r}$ is continuous and strictly monotone decreasing to zero as $t\to \infty$. Thus we can define translates $t_n>0$ for which: 
        \begin{equation} \label{****}
        \abs{(\D_{A_n}(2)+(0,t_n))\cap K} = \abs{\X(2)\cap K}, \ \text{for all } n.
        \end{equation}

        Next we claim that $t_n\to 0$. Indeed, suppose not, so that extracting a subsequence if necessary, we may assume that $t_n\to t>0$.
        Observe that 
        \begin{equation*}
            \abs{(\D_{A_n}(2)+(0, t_n/2)\cap K}>\abs{(\D_{A_n}(2)+(0, t_n)\cap K}=\abs{\X(2)\cap K}, \ \text{for all } n.
        \end{equation*}

        Therefore by passing to the limit we have $\D_{A_n}(2)\cap K\to \X(2)\cap K$ locally and hence we conclude that $\abs{(\X(2)+ (0, t/2)\cap K}\geq \abs{\X(2)\cap K}$ which contradicts the strict monotonicity of the area under vertical translation. Hence $t_n\to 0$ and therefore we may also conclude from Lemma \ref{lem:approximating-double-bubble} that $\D_{A_n}+(0, t_n)\to \X$ locally.
        
        Now construct the competitor as before but replacing $\D_A$ with $\D_{A_n}+(0, t_n)$, namely $\D_{A_n}''$ agrees with $\X'$ in $K$, coincides with $\D_{A_n}+(0, t_n)$ outside of $K'$, and with $\D''_{A_n}(2)={\mathcal{{R}}}_{A_n}\cup (\D_{A_n}(2)+(0, t_n))$ and $\D''_{A_n}(3)=(\D_{A_n}(3)+(0,t_n))\setminus \mathcal{R}_{A_n}$ in $\mathcal{A}$. 
        As before the modification of the perimeter only occurs in the annular region and by taking the area $A_n$ sufficiently large, the modification can be made smaller than $\frac{\eps}2$ where $\eps$ is defined  by (\ref{eps}). 
        So the perimeter of the competitor now satisfies:
        \begin{align*}
            P(\D_{A_n}''; B_r)
            &\leq P(\X;B_r)-\frac{\eps}2\\
            &< \liminf P(\D_{A_n}+(0, t_n);B_r).
        \end{align*}
        Further using (\ref{*}) and (\ref{****}) the area of the competitor satisfies: 
        \begin{align*}
            \abs{\D_{A_n}''(2)}&= \abs{(\D_{A_n}(2)+(0,t_n))\setminus K'} + \abs{{\mathcal{{R}}}_{A_n}\cup (\D_{A_n}(2)+(0,t_n)) \cap \mathcal{A}} + \abs{\X'(2)\cap K} \\
            &>\abs{(\D_{A_n}(2)+(0,t_n))\setminus K' }+\abs{(\D_{A_n}(2)+(0,t_n)) \cap \mathcal{A}}+\abs{\X(2)\cap K}\\
            &=\abs{(\D_{A_n}(2)+(0,t_n))\setminus K' }+\abs{(\D_{A_n}(2)+(0,t_n)) \cap \mathcal{A}}+\abs{(\D_{A_n}(2)+(0, t_n))\cap K}\\
            &\geq \abs{\D_{A_n}(2)+ (0, t_n)}.
        \end{align*}

        This once again contradicts the minimality of the double bubble with area $(1,A_n)$ for $A_n$ sufficiently large.  Therefore the lens cluster $\X$ is a minimizer in the mixed planar partitioning problem with area vector $(1, \infty, \infty)$.

        \subsection*{Uniqueness of the minimizer}

        Let $\M = (\M(1), \M(2), \M(3))$ be a locally minimizing $(1,2)$-cluster for the mixed planar partitioning problem with area vector $(1, \infty, \infty)$. We will show that (up to symmetries of the problem) necessarily  $\M = \X$, the standard lens cluster. 
        We recall the notation $\M(j,k)=\partial^* \M(j) \cap \partial^* \M(k)$ for the interfaces between the chambers.

        First, we recall Remark~\ref{perimeter_remark}, and prove that a locally minimizing $(N,M)=(1,2)$ cluster $\M$ (which {\it a priori} is defined as having locally finite perimeter in each chamber) must in fact have a proper chamber $\M(1)$ with finite total perimeter in $\R^2$.

        \begin{prop}\label{finiteper}  Suppose $\M=(\M(1),\M(2),\M(3))$ is a cluster in $\R^2$ satisfying:
            \begin{enumerate}[label=(\roman*)] 
                \item $|\M(1)|=1$;
                \item $|\M(2)|=\infty=|\M(3)|$:
                \item  Each $\M(j)$ has locally finite perimeter;
                \item  $\M$ is locally minimizing.
            \end{enumerate}
        Then $\mathrm{Per}_{\R^2}(\M(1))<\infty$.
        \end{prop}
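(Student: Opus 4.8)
\emph{Strategy.}\ The goal is to show that the essential boundary $\partial^*\M(1)$ lies in some bounded ball $B_T$. Granting this, $\mathrm{Per}_{\R^2}(\M(1))=\Haus^1(\partial^*\M(1))=\Haus^1(\partial^*\M(1)\cap B_T)=P(\M(1);B_T)$, and the right-hand side is finite by hypothesis (iii), namely local finiteness of perimeter. Boundedness of $\partial^*\M(1)$ will follow from the interior volume density estimate for the chamber $\M(1)$ together with $|\M(1)|=1<\infty$, so the whole proof reduces to establishing that density estimate in our (slightly non-standard) setting.

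\emph{Step 1: $\M$ is locally a $(\Lambda,r_0)$-minimizing cluster.}\ By a volume-fixing variation --- a straightforward adaptation of \cite[Chapter 29]{maggi2012sets} --- I would produce $\Lambda,r_0>0$ such that for every $x\in\R^2$, every $0<\rho<r_0$, and every cluster $\M'$ with $\M'(j)\,\Delta\,\M(j)\subset\subset B_\rho(x)$ (not necessarily of the correct area vector),
\[
  P(\M;B_\rho(x))\le P(\M';B_\rho(x))+\Lambda\,\rho^2 .
\]
The point is that under a perturbation supported in $B_\rho(x)$ the two improper chambers remain of infinite measure, so nothing must be restored there; only the constraint $|\M(1)|=1$ needs to be balanced, which is done by flowing $\M'$ along a fixed $C^1_c$ vector field supported near a point of density one of $\M(1)$, at perimeter cost $\le\Lambda\,\bigl|\,|\M'(1)|-1\,\bigr|\le\Lambda\rho^2$. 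Everything here takes place in a bounded region, on which $\M$ has finite perimeter by (iii), so only the \emph{locally} finite perimeter hypothesis is used and there is no circularity.

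\emph{Step 2: density estimate and conclusion.}\ From Step 1, the standard argument for chamber density estimates (the cluster analogue of \cite[Theorem 21.11]{maggi2012sets}; cf.\ \cite[Chapters 21 and 30]{maggi2012sets}) yields $c_0>0$ (and, shrinking it, $r_0>0$) with $|\M(1)\cap B_\rho(x)|\ge c_0\rho^2$ for all $x\in\partial^*\M(1)$ and $0<\rho<r_0$. Here the natural competitor in $B_\rho(x)$ deletes $\M(1)\cap B_\rho(x)$ and fills the cavity with a single one of the improper chambers $\M(2),\M(3)$ (the one $\M(1)$ actually borders); because the cavity is filled by one chamber of infinite measure, no interior reconnection curve is created and no improper volume is constrained, which is precisely why the estimate survives the passage to $(1,2)$-clusters. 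Now let $\{x_1,\dots,x_N\}\subset\partial^*\M(1)$ be maximal among subsets with pairwise distances $\ge r_0/2$. The balls $B_{r_0/4}(x_i)$ are disjoint and each carries $\M(1)$-mass $\ge c_0(r_0/4)^2$, so $N\le 16/(c_0r_0^2)<\infty$; by maximality $\partial^*\M(1)\subset\bigcup_{i=1}^N B_{r_0/2}(x_i)$, which is bounded. Hence $\mathrm{Per}_{\R^2}(\M(1))<\infty$ by the reduction in the Strategy. (The same covering incidentally shows $\M(1)$ itself is bounded: outside a large ball it has empty essential boundary, hence is null or co-null in the connected complement of that ball, and co-null is excluded by $|\M(1)|<\infty$.)

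\emph{Main obstacle.}\ The real work is checking that the local regularity toolbox of \cite{maggi2012sets} --- volume-fixing variations and chamber density estimates, both developed there for finite-perimeter clusters with finitely many finite-volume chambers --- carries over once one notes that (a) every construction is localized in a ball on which $\M$ has genuine finite perimeter, and (b) the improper chambers impose no volume constraint under compactly supported perturbations, so that exactly one scalar constraint, $|\M(1)|=1$, must be balanced. I expect this adaptation to be routine, but making the ``localization plus a single constraint'' reduction precise is the crux of the argument.
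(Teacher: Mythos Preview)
Your proposal is correct, but takes a markedly different route from the paper's proof. The paper argues by direct contradiction with an explicit competitor: assuming $\mathrm{Per}_{\R^2}(\M(1))=\infty$, it chooses $R$ with $\mathrm{Per}_{B_R}(\M(1))\ge 3\sqrt{\pi}$, uses Fubini to find $R_0\in[R,R+1]$ with $\Haus^1(\M(1)\cap\partial B_{R_0})\le 1$, and then replaces $\M(1)\cap B_{R_0}$ by a single small disk of equal area (reassigning the excised piece to $\M(2)$). A three-line perimeter count then beats $\M$ inside $B_{R_0}$. This is entirely elementary---no $(\Lambda,r_0)$-minimality, no density estimates, no adaptation of the regularity toolbox---and the whole argument fits in half a page. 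Your approach, by contrast, invests in establishing $(\Lambda,r_0)$-minimality and the interior density lower bound for $\M(1)$, then concludes boundedness of $\partial^*\M(1)$ via a covering argument. This is heavier machinery but yields more: the density estimate and almost-minimality are themselves useful (and indeed are effectively re-derived later in the paper through the regularity citations in Steps~1--4 of the uniqueness proof). The trade-off is that your ``main obstacle''---porting the volume-fixing variations and density arguments of \cite{maggi2012sets} to the $(1,2)$-cluster setting---is precisely what the paper's direct argument sidesteps; the paper buys brevity and self-containment, while your route buys structural information at the cost of deferring a routine but nontrivial verification. One small point: in your Step~2, filling the cavity with a \emph{single} improper chamber only controls one of $\Haus^1(\M(1,2)\cap B_\rho)$ or $\Haus^1(\M(1,3)\cap B_\rho)$; to bound the full $P(\M(1);B_\rho)$ you should run both competitors and add, which is harmless but worth making explicit.
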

        \begin{proof}  Assume for a contradiction that $\mathrm{Per}_{\R^2}(\M(1))=\infty$.  Then we may choose $R>0$ for which 
        \begin{equation}\label{FP1}   \mathrm{Per}_{B_R}(\M(1))\ge 3\sqrt{\pi}.
        \end{equation}
        As
        $$  1=|\M(1)| = \int_0^\infty \int_{\partial B_r} \mathbf{1}_{\M(1)}\, ds_r\, dr,  
        $$
        by Fubini's Theorem there exists $R_0\in [R,R+1]$ so that 
        \begin{equation}\label{FP2}  \int_{\partial B_{R_0}} \mathbf{1}_{\M(1)}(x)\, ds_{R_0} \le 1
             \quad \text{ie.,} \quad \Haus^1\left(\M(1)\cap \partial B_{R_0}\right) \le 1,    
        \end{equation}
        and thus $\M(1)$ intersects $\partial B_{R_0}$ on arcs with total arclength at most one.  Define
        $$ \eps:= \int_{B_{R_0}} \mathbf{1}_{\M(1)} (x)\, dx \le 1.  $$
        Inside $B_{R_0}$, $\partial \M(1)$ must intersect either $\partial\M(2)$ or $\partial\M(3)$ (or both).  Suppose it's true for $\partial\M(2)$, so $\Haus^1(\partial\M(1)\cap\partial\M(2)\cap B_{R_0})>0$. 

        Now, construct a competitor $\tilde\M$ as follows, by modification of $\M$ inside $\overline{B_{R_0}}$. Choose $r=\sqrt{\eps/\pi}$ so that the ball $|B_r(0)|=\eps$.  Then, we define
        \begin{gather*}
            \tilde \M(1):= \left[\M(1)\setminus B_{R_0}\right] \cup B_r(0), \\
            \tilde \M(2):= \left( \M(2)\cup [\M(1)\cap B_{R_0}] \right)\setminus B_r(0), \\
            \tilde \M(3):= \M(3)\setminus B_r(0).
        \end{gather*}
        Outside of $B_{R_0}$, nothing changes.  The value of $r$ is chosen so that the area constraint $|\tilde\M(1)|=1$ is satisfied.  Inside $B_{R_0}$, perimeter is decreased by gluing $\M(1)\cap B_{R_0}$ onto $\M(2)$, as some interfaces will disappear.  On the other hand, new interfaces may be created along $\partial B_{R_0}$, but by \eqref{FP2} the added perimeter is bounded by one.  We may thus calculate the change in perimeter of the proper chamber:  by \eqref{FP1}, \eqref{FP2}, and the choice of $r$,
        \begin{align*}
            \mathrm{Per}_{B_{R_0}}\left(\M(1)\right) - \mathrm{Per}_{B_{R_0}}\left(\tilde\M(1)\right)
              &\ge 3\sqrt{\pi} - \mathrm{Per}_{B_{R_0}}(B_r) - \Haus^1\left(\M(1)\cap\partial B_{R_0}\right) \\
              &\ge 3\sqrt{\pi} - 2\sqrt{\pi} - 1 = \sqrt{\pi}-1>0.
        \end{align*}
        We conclude that the perimeter $P(\tilde\M; B_{R_0})<P(\M;B_{R_0})$, which contradicts the local minimality of $\M$.  Hence $\mathrm{Per}_{\R^2}(\M(1))<\infty$.
        \end{proof}
        
        We now prove the uniqueness statement via a series of claims.

        \begin{enumerate}

            \item  {\it Each interface $\M(j,k)$ is an analytic arc of constant curvature, and so either a straight line or an arc of a circle.}  This follows from Almgren \cite{almgren} (see also p. 391 or Remark~30.4 in \cite{maggi2012sets}, and Taylor \cite{taylor} for the three-dimensional case.)  Note that Almgren and Maggi only consider the case of a single  improper chamber, but the regularity theory holds locally along the interfaces and so the proofs are identical.  

            Since the chambers of $\M$ have piecewise smooth boundary, we may assume that each is an open set, and speak of its connected components (as opposed to indecomposable measurable sets, the proper notion of connectivity for sets of locally finite perimeter.)
            
            \item {\it The transitions $\M(2,3)$, between phase domains $\M(2)$ and $\M(3)$ are straight line segments,  rays, or full lines.}  As there is no mass constraint for $\M(2),\M(3)$, there is no Lagrange multiplier in the Euler-Lagrange equation, and so minimizers must have zero curvature at points of $\M(2,3)$.
            
            \item {\it The transitions $\M(1,2),\M(1,3)$ between the finite area component $\M(1)$ and the infinite area components $\M(2),\M(3)$ are circular arcs, each of the same constant curvature independent of the connected component of $\M(1)$ or the neighboring phase $\M(2), \M(3)$.}  This follows from the regularity theory, and the calculation of the Euler-Lagrange equation in the area constrained case, as in \cite[Theorem 17.20]{maggi2012sets}. The choice of Lagrange multiplier should be the same at any regular point in $\partial \M(1)$, and so is independent of the connected component or of the choice of interface $\M(1,3),\M(1,2)$.  
            
            \item  {\it Singular points of $\M(j,k)$ occur when analytic arcs meet; these can only occur at triple junction points, that is, points at which all three phases meet, and the tangents to the arcs form 120 degree angle junctions.}  Again, this follows from the regularity theory, and the classification of planar cone clusters \cite[Theorem 30.7]{maggi2012sets}.  As an immediate consequence, we observe that the lines forming the transitions $\M(2,3)$ cannot cross, and can only terminate on components of the finite area domain $\M(1)$.  
            \item \label{unbounded} {\it  The infinite area components $\M(2),\M(3)$ need not be connected, but they cannot have bounded connected components.}  Assume that (say) $\M(2)$ has a connected component $\Omega$ which is bounded.  Suppose $\partial\Om$ has nontrivial intersection with $\partial\M(3)$.  In that case, we create a new cluster, $\tilde\M=(\tilde \M(1),\tilde \M(2),\tilde \M(3))$ which is identical to $\M$ except that $\Omega$ is assigned to $\tilde \M(3)$, that is, $\tilde \M(3)=\M(3)\cup \Omega$, $\tilde \M(2)=\M(2)\setminus\overline{\Omega}$, and $\tilde \M(1)=\M(1)$. Then, $\tilde\M$ is an admissible cluster which agrees with $\M$ outside of a compact set (containing $\Omega$), but has smaller total perimeter, as a component of $\M(2,3)$ is erased by the modification.  This contradicts local minimality of $\M$, and so we obtain the desired conclusion.  If instead the boundary $\partial\Om$ is disjoint from $\partial\M(3)$, then the component $\Om$ must be contained inside a connected component $\tilde\Om$ of $\M(1)$. If there were such an island, by translating it inside $\tilde\Om$ until it contacts the outer boundary of $\tilde\Omega$ we create a competing configuration with the same area of $\M(1)$, which agrees with $\M$ outside of a compact set, and with the same perimeter.  However, the new configuration has a singular point where the phases are tangent, violating the 120 degree condition at junctions.  
            
            \item \label{nequalsone}  {\it Each connected component of $\M(1)$ has exactly two triple junction points on its boundary. }  That is, for any connected component $\Omega$ of $\M(1)$, $\partial \Omega$ consists of two disjoint arcs of circles (of the same curvature, for each arc and each connected component), one from $\M(1,2)$ and one from $\M(1,3)$.  
            
            The proof of this claim is subtle, and will be done in several steps.  First, any connected component $\Omega$ of $\M(1)$ must be bounded.  This follows from our definition of improper cluster, in particular that $\M(1)$ is a set of (globally) finite perimeter.  In the plane, the perimeter controls the diameter of a connected component, which must therefore by finite.
            
            Next, any connected component $\Omega$ of $\M(1)$ must contact both of $\M(2),\M(3)$ on its boundary.  First, note that by the previous paragraph $\Omega$ cannot be multiply connected, with an island of either $\M(2)$ or $\M(3)$ inside. Next suppose that $\M$ is a local minimizer with a component $\Omega$ whose boundary is disjoint from (say) $\M(3)$. By translating $\Omega$ inside $\M(2)$ until it is tangent to a regular point of $\E(2,3)$ we create an admissible variation of $\M$ with the same area constraint and the same perimeter, but with a singular point which violates the 120 degree angle condition at junctions between the three phases. Hence it is impossible to have a connected component of $\M(1)$ without triple junctions.  As the outer boundary of $\Omega$ is a closed curve, for each transition of $\M(2)$ to $\M(3)$ on the curve there must be a second transition with the opposite orientation, and so at least a pair of junctions on $\partial\Omega$.

            Now suppose there are $2n$ transitions on $\partial\Omega$; we will show that necessarily $n=1$.   Consider a large circle $C_R$ of radius $R$ which contains the component $\Omega$ inside.  Since each connected component of $\M(2)$, $\M(3)$ is unbounded, by choosing $R$ sufficiently large $C_R$ must intersect both unbounded sets, and so there must be at least two transitions between the unbounded phases on $C_R$.   We claim that the number of arcs in $C_R\cap \M(2)$ and $C_R\cap \M(3)$ is also at least $n$ in each case. In case $n=1$ there is nothing to prove, so assume $n\ge 2$. Take two connected arcs in $\partial\Omega\cap\M(1,2)$, and assume (for a contradiction) that they determine the same connected component $\Sigma$ of $\M(2)\cap \overline{B_R}\setminus\Omega$.  For any points $x,y$, one in each of the distinct connected arcs of $\partial\Omega\cup\M(1,2)$, we may then connect them with a simple path inside $\Sigma$.  Then, that path unioned with the arc along $\partial\Omega$ connecting $x,y$, defines a bounded domain which must enclose a (connected) component of $\M(3)$.  But this contradicts our Step \ref{unbounded}, and so distinct connected arcs on $\M(2)\cap\partial\Omega$ define distinct connected (and unbounded) components of $\M(2)\cap \overline{B_R}\setminus\Omega$.  Thus the number of connected component arcs in $C_R\cap \M(2)$ must be at least $n$ (and similarly for $C_R\cap \M(3)$.) 

            Finally, to arrive at a contradiction we suppose $n\ge 2$ and make a construction suggested to us by F. Morgan. Fix a circle $C_r$ which circumscribes $\partial\Omega$.   By the previous paragraph, there are at least $n$ components of each of $\M(2)$, $\M(3)$  cut by $C_R$.  The endpoints of these arcs lie on the boundaries $\partial\M(2)$, $\partial\M(3)$, and each may be connected to distinct triple junction points on $\partial\Om$ by distinct curves forming part of the respective boundaries.  The total length of these connecting curves inside $B_R\setminus\Omega$ is thus at least $2n(R-r)$.  Now, consider the following admissible modification of $\M$:  along $C_R$, one of $\M(2)\cap C_R$ and $\M(3)\cap C_R$ can occupy at most half, $L_R\le\pi R$, of the circumference.  Suppose it is $\M(3)$.  We then create a new configuration by excising the region $\M(3)\cap B_R$ and reassigning this set to $\M(2)$.  In doing so, we have introduced a new transition between $\M(3)$ and $\M(2)$ along $C_R$, which adds $L_R$ to the total perimeter, but effectively removed the transitions between $\M(3)$ and $\M(2)$ in the region $B_R\setminus \Omega$, saving at least $2n(R-r)$ in perimeter. (See Figure~\ref{figure5} for an example.) 
            The new configuration $\tilde\M$ agrees with $\M$ outside of $C_R$, and has perimeter,
            $$ P(\tilde \M; B_R)-P(\M;B_R) \le \pi R + 2\pi r - 2n(R-r) = (\pi - 2n)R + (2\pi+2n)r <0,$$
            choosing $R$ sufficiently large, when $n\ge 2$.  This again contradicts the local minimality of $\M$, and so we must have $n=1$, and each connected component $\Omega$ of $\M(1)$ has exactly one pair of triple junctions on $\partial\Omega$.

            \begin{figure}
                \centering
                \includegraphics[scale=0.47]{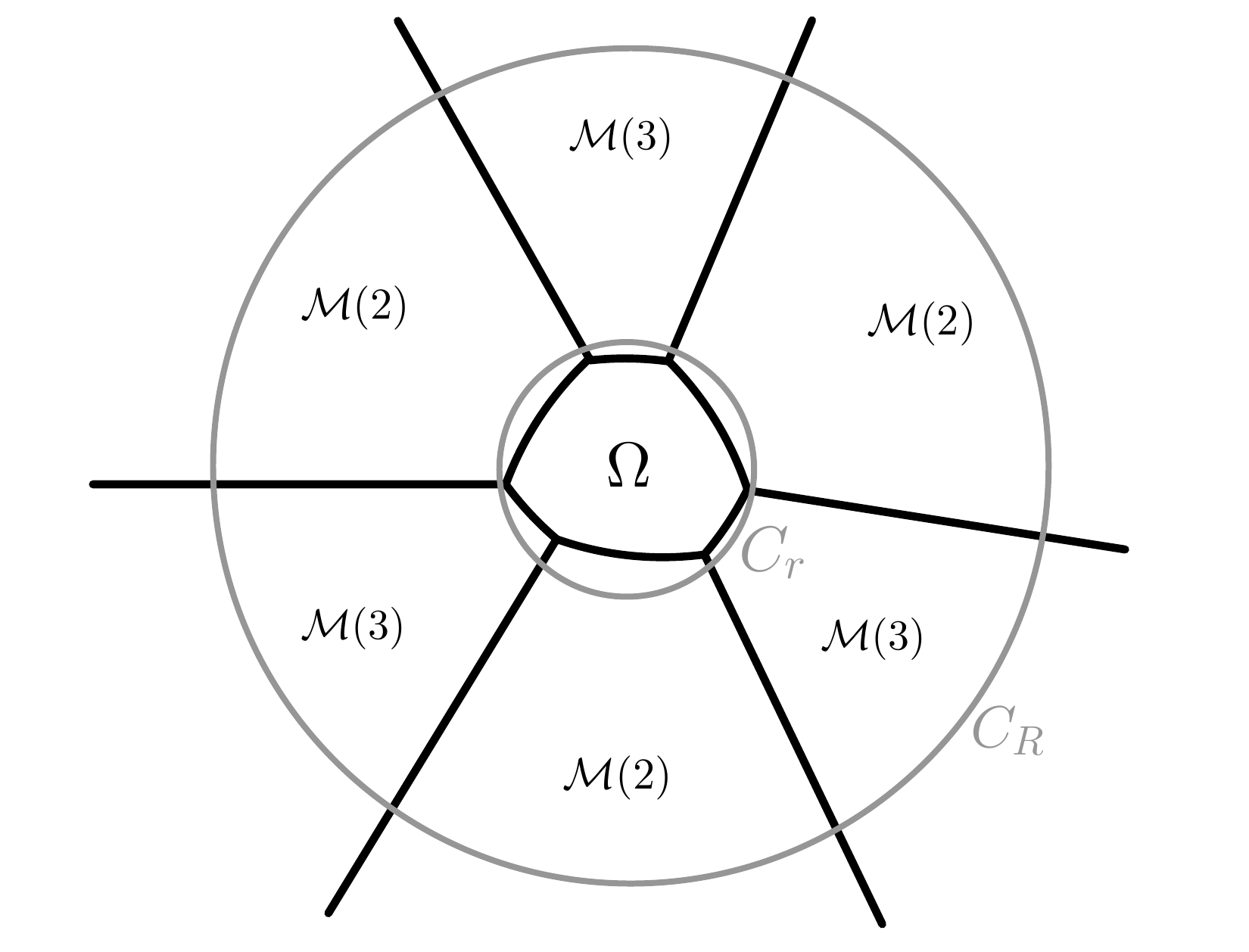}
                \caption{ $\Omega$ meets $2n$ alternating unbounded components of $\M(2)$ and $\M(3)$, and is enclosed by circles $C_R$ and $C_r$.  Reassigning the regions in $\M(3)\cap B_R$ to $\M(2)$ introduces transitions along $C_R$ but eliminates at least $2n(R-r)$ of perimeter inside $B_R$.} 
                \label{figure5}
            \end{figure}

            \item \label{isomorphic} {\it Each connected component of $\M(1)$ is isomorphic to a single lens shape (each with the same area).}  From the previous step, we know that a component $\Omega$ of $\M$ is simply connected and has exactly one pair of triple junctions, $q_1,q_2$.  Each arc in $\partial\Om$ is a circle of the same radius $s$; call $p_1,p_2$ the centers of these circles, and draw the rhombus connecting $p_1,q_1,p_2,q_2$ (each side of length $s$).  As the triple points meet with tangents of 120 degrees, the radii (which are normal to the arcs at the triple points) also meet at an angle of 120 degrees.  Hence the opposite angles to the rhombus at $p_1,p_2$ each measure 60 degrees, and the rhombus decomposes as two equilateral triangles, via the diagonal of length $s$ connecting $q_1,q_2$. The circular arcs then describe a lens, with area uniquely determined by the radius $s$, and so each connected component of $\M(1)$ is congruent.

            \item {\it There are a finite number of connected components of $\M(1)$, and it is a bounded set.} This follows directly from the finite area constraint and the previous step.  

            \item {\it Outside of a sufficiently large disk $B_R$, $\M(2)$ and $\M(3)$ coincide with complementary half-planes, and $\M(1)$ is symmetric with respect to reflection in the separating plane.}  Fix any one of the connected components $\Om$ of $\M(1)$. By Step \ref{isomorphic} we may choose axes for which $\Om$ is centered at the origin, and whose triple points lie on the $x$-axis.  By the 120 degree angle condition, a part of the transition set $\M(2,3)$ must lie along the $x$-axis outside of $\Om$.  Either these are rays extending out along the $x$-axis to infinity, or else they terminate at some other connected component of $\M(1)$.  In the latter case, those other components of $\M(1)$ are again congruent to a standard lens shape, and by the 120 degree angle condition those additional lenses must connect to $\M(2,3)$ as segments or rays along the $x$-axis.  Since there are only finitely many connected components of $\M(1)$ this process must terminate, and there is a first and last component of $\M(1)$ lying along the $x$-axis, with a component of $\E(2,3)$ lying on opposing rays along the axis.  We claim that this exhausts {\it all} connected components of $\M(1)$, and so the whole set $\M(1)$ lies along the $x$-axis, and is symmetric with respect to reflection in the axis.  Indeed, if there were another connected component $\tilde\Om$ of $\M(1)$ which did {\it not} lie on the axis, we would repeat the above procedure to reveal a distinct second line in the plane on which $\tilde\Om$ is centered.  Other components could lie on this second line, but being finite in number there will be a first and last one along the line, and another component of $\M(2,3)$ would exist as a pair of collinear rays, distinct from the $x$-axis.  However, this contradicts Step~\ref{unbounded}, which states that on any sufficiently large circle there can be only a single pair of transition points between $\M(2)$ and $\M(3)$.  In conclusion, there can be no other rays of $\M(2,3)$ other than along the $x$-axis, and thus outside of a large disk $\M(2)$ and $\M(3)$ coincide with the upper and lower half-planes, and all connected components of $\M(1)$ must be symmetrically placed along the $x$-axis.

            \item {\it $\M(1)$ consists of a single unit lens, centered on the $x$-axis.}  If there were several connected components in $\M(1)$, they would be aligned on the $x$-axis, and so each could be translated horizontally without changing the total area or perimeter, until they were in contact at their junction points.  This modified configuration coincides with $\M$ outside of a ball, and it therefore also a local minimizer.  However, the contact at the junction points would then  create singularities which contradict the 120 degree angle condition imposed by regularity theory, and so having multiple components in $\M$ is not possible.  By Step~\ref{isomorphic}, the single component is a unit lens. 

            \end{enumerate}

         This proves uniqueness. \qed

    \section{Some conjectures}

    Here we collect some known results and conjectures concerning minimizing planar $(N,M)$-clusters; see Table~\ref{table1} for a summary.

    \begin{table}\label{table1}
        \[\begin{array}{c|cccc}
            \tikz{\node[below left, inner sep=1pt] (M) {M};%
                \node[above right,inner sep=1pt] (N) {N};%
                \draw (M.north west|-N.north west) -- (M.south east-|N.south east);} & 0 & 1 & 2 & 3 \\
            \hline
            1 & \R^2 & \text{Disc} & \text{Std.\ $2$-bubble} & \text{Std.\ $3$-bubble} \\
            2 & \text{Two half-planes}^{\dag} & \text{Lens} & \text{Peanut}^* & \\
            3 & \text{Steiner partition}^{\dag} & \text{Tailor's chalk}^* & & 
        \end{array}\]
        \caption{Known and conjectured minimizing $(N,M)$-clusters in $\R^2$, for  $N, M \leq 3$. (${}^\dag$ indicates minimality among cone-like improper $M$-clusters; ${}^*$ indicates conjectured minimal clusters.)}
    \end{table}

    The case $N=0$ of {\it improper clusters} is discussed in \cite[Proposition 30.9]{maggi2012sets}:  among cone-like clusters, a pair of complementary half-planes is minimizing for $(0,2)$-clusters, and the Steiner partition (three rays meeting at 120 degree angles) is the unique  $(0,3)$-minimizer.  The case  $M = 1$ (a single  improper chamber) reduces to the  $N$-bubble problem. Results in the plane for  $N = 2$ are by Foisy et al.\ \cite{Foisy_etal}, and by Wichiramala \cite{wichiramala2004proof} for  $N = 3$.  Simpler alternative proofs using calibrations were more recently given by Lawlor  in \cite{lawlor_double} and \cite{lawlor2019perimeter}, for double and triple bubbles respectively.  Resolution of the case of  $N\le\min\{5, n+1\}$  bubbles in $\R^n$ was proven by Milman and Neeman \cite{milman2022structure}, who have additional results on quintuple-bubbles in higher dimensions \cite{milman2023plateau}.

    With the exception of the $(1,2)$ lens cluster, the cases with  $M \geq 2$ remain open.  We conjecture that the minimizer among the $(2,2)$-clusters with area vector $(1,1,\infty,\infty)$ is the peanut shape illustrated in Figure~\ref{fig:peanut-cluster}; and among $(1,3)$-clusters with area vector $(1,\infty,\infty,\infty)$, we believe the unique  minimizer should be the ``tailor's chalk'' configuration, as in Figure~\ref{fig:tailors-chalk-cluster}.

        \begin{figure}
            \centering
            \includegraphics[scale=0.45]{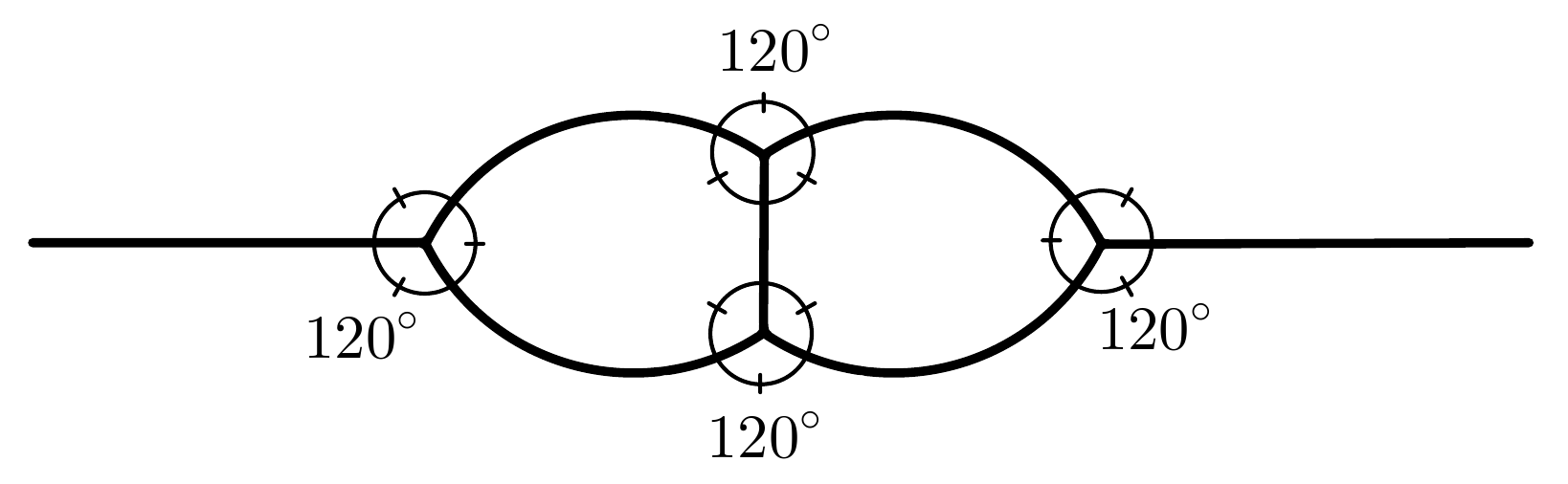}
            \caption{The peanut cluster in $\R^2$}
            \label{fig:peanut-cluster}
        \end{figure}


        \begin{figure}
            \centering
            \includegraphics[scale=0.5]{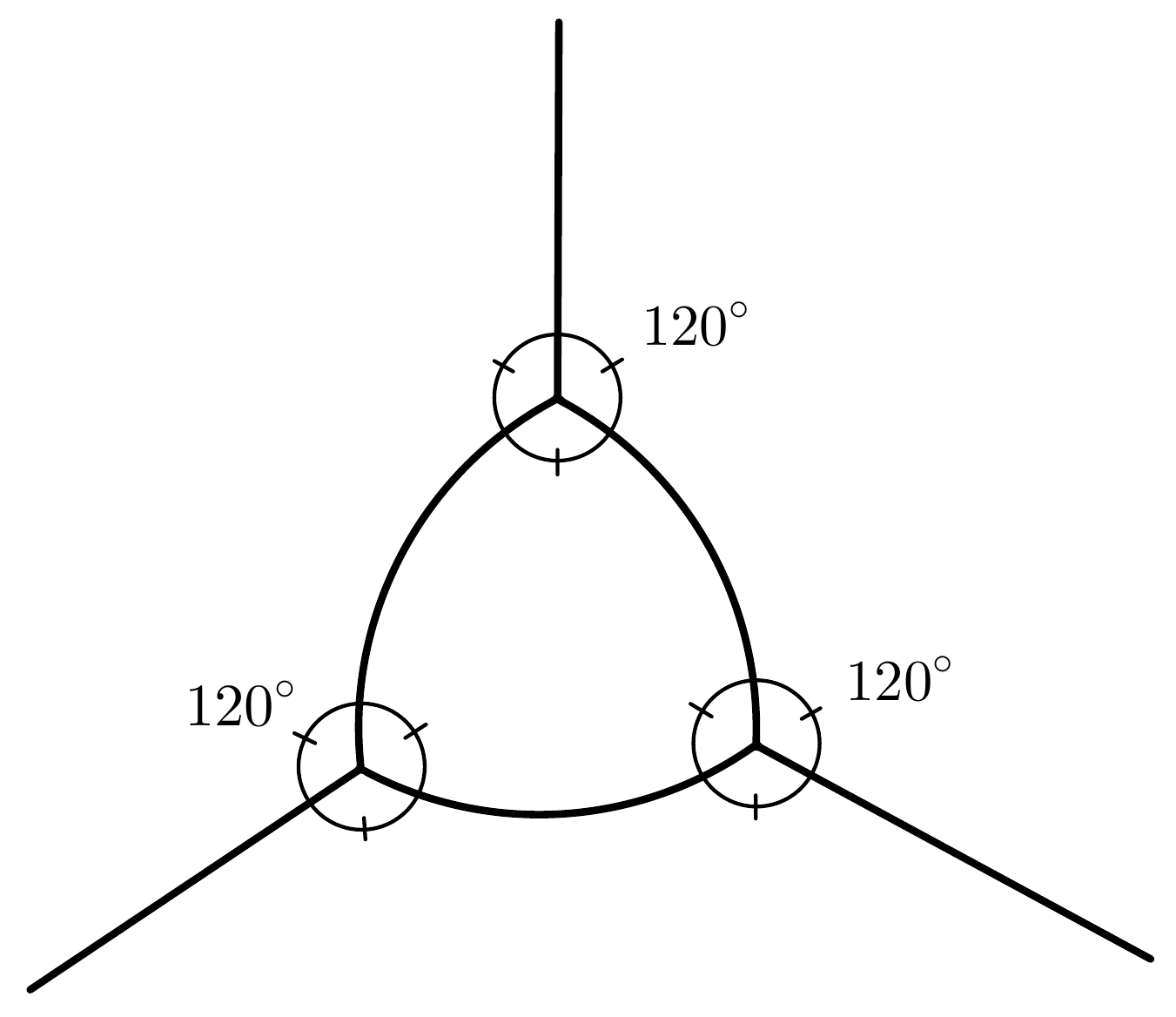}
            \caption{The tailor's chalk cluster in $\R^2$}
            \label{fig:tailors-chalk-cluster}
        \end{figure}
    

    \section{Acknowledgements}

    We would like to thank Frank Morgan, who suggested the idea for the proof of the main result, and from whom we enjoyed insightful correspondence. Thank you to Heather Lowe, whose expertise as a sewist led her to suggest the name of the tailor's chalk cluster.  SA and LB were supported by NSERC (Canada) Discovery Grants. SV was supported by an NSERC Canada Graduate Scholarship (Master's) and an Ontario Graduate Scholarship.



\bibliographystyle{plain}
   
\end{document}